\crefname{hypothesis}{Hypothesis}{Hypotheses}
\title{{ {Equilibrium portfolio selection under beliefs-dependent utilities}}\thanks{Submitted to the editors DATE.
\funding{This work was funded by the National Natural Science Foundation of China, grants 12271290 and 11871036.}}}
\author{Xiaochen Chen\thanks{Department of Mathematical Sciences, Tsinghua University, Beijing, 100084 People’s Republic of China\\
  (\email{xc-chen24@mails.tsinghua.edu.cn}).}
\and  Guohui Guan\thanks{Center for Applied Statistics, Renmin University of China, Beijing, 100872 People’s Republic of China
  (\email{guangh@ruc.edu.cn}).}
\and Zongxia Liang\thanks{Department of Mathematical Sciences, Tsinghua University, Beijing, 100084 People's Republic of China\\
  (\email{liangzongxia@tsinghua.edu.cn}).}
}
\begin{document}

\maketitle

% REQUIRED
\begin{abstract}
This paper investigates portfolio selection within a continuous-time financial market with regime-switching and beliefs-dependent utilities. The market coefficients and the investor's utility function both depend on the market regime, which is modeled by an observable finite-state continuous-time Markov chain. The optimization problem is formulated by aggregating expected certainty equivalents under different regimes, leading to time-inconsistency. Utilizing the equilibrium strategy, we derive the associated extended Hamilton-Jacobi-Bellman (HJB) equations and establish a rigorous verification theorem.  As a special case, we analyze equilibrium portfolio selection in a beliefs-dependent risk aversion model. In a bull regime, the excess asset returns, volatility, and risk aversion are all low, while the opposite holds in a bear regime. Closed-form solutions in the CRRA preference regime model of bull and bear markets are obtained, which is expressed by a solution to four-dimensional non-linear ODEs.  The global existence of the ODEs is proven and we verify the equilibrium solution rigorously. We show that the equilibrium investment strategy lies between two constant Merton's fractions. Additionally, in our numerical experiment, the equilibrium proportion allocated in the risky asset is greater in a bull regime than in a bear regime and the equilibrium proportion increases with time in a bull regime while decreasing in a bear regime.
\end{abstract}

% REQUIRED
\begin{keywords}
 Portfolio selection, Beliefs-dependent utilities, Regime-switching, Certainty equivalents, Equilibrium strategy, Time-inconsistency
\end{keywords}

% REQUIRED
\begin{MSCcodes}
 93E20, 49J15, 91B28
\end{MSCcodes}

\section{{{\bf Introduction}}}
Optimal portfolio selection has long been a fundamental issue in modern financial theory. Pioneering work by \citet{samuelson1969lifetime} and \cite{merton1969lifetime,merton1971optimum} establish foundational principles for subsequent research in this area. 
Over the decades, substantial advancements have emerged, as noted in \cite{kramkov1999asymptotic}, \cite{li2002dynamic}, \cite{brandt2010portfolio}, and \cite{wong2017utility}. A key characteristic of financial markets is that constant financial parameters typically apply only over short durations. This was particularly evident during the global financial crisis of 2008-2009 and the COVID-19 pandemic, where stock returns and volatilities exhibited distinct patterns across different periods. Research indicates that various market parameters-including the risk-free rate, expected returns on risky assets, and market volatility are influenced by prevailing market regimes (see, e.g., \cite{fama1989business}, \cite{schwert1989does}, \cite{french1987expected}, and \cite{hamilton1989new}). Consequently, many scholars have studied portfolio selection under regime-switching, which employs a continuous-time Markov process with a finite state space to capture the uncertainty of macroeconomic conditions. The expected utility maximization problem under regime-switching has been studied in works such as \cite{sass2004optimizing}, \cite{rieder2005portfolio}, \cite{brandt2010portfolio}, and \cite{liu2011dynamic}, etc.

Due to analytical tractability, the expected utility maximization problem under regime-switching assume a fixed utility function across different regimes (see \cite{brandt2010portfolio}, \cite{liu2014optimal}, \cite{gassiat2014investment}). However, in practice, an investor's utility and risk aversion are not static. 

Numerous studies indicate that investors typically display higher risk aversion during bull markets and lower risk aversion during bear markets. For instance, \cite{guiso2018time} provide empirical evidence of a significant surge in investor risk aversion following the 2008 financial crisis. Additionally, \cite{gordon2000preference} employ a two-state discrete-time Markov chain to model market regimes, positing that risk aversion varies with the regime. Their findings suggest substantial improvements in addressing the cyclical nature of equity prices. The optimization problem under beliefs-dependent (state-dependent) utilities can be formulated as follows:
\begin{equation}\label{problem:1}
\max_\pi \mathbb{E}\left[U^{\epsilon_T}\left(X^\pi_T\right)\,|\,\mathcal{F}_t\right],
\end{equation}
where $\pi$ represents the investment strategy, $X^\pi_T$ denotes the investor's wealth at terminal time $T$,  $\epsilon_t$ is a finite-state continuous-time Markov chain representing market regimes at time $t$, $U^{\epsilon_T}(\cdot)$ is the beliefs-dependent utility function and $\mathcal{F}_t$ represents the market information before time $t$. The studies by \cite{karni1983correspondence} and \cite{karni1983risk} extend the Arrow-Pratt measures of risk aversion in the context of beliefs-dependent utilities. Problem \eqref{problem:1}  is time-consistent, allowing for the application of dynamic programming methods and numerical methods. Beliefs-dependent utilities have been shown to be a powerful analytical tool, as evidenced by their application in \cite{chabi2008state} to address the risk aversion puzzle and in \cite{berrada2018asset} to capture the dynamic features of empirical financial data. Furthermore, \cite{li2022time} demonstrate that incorporating beliefs-dependent utilities significantly enhances the economic value of portfolio decisions.

Problem \eqref{problem:1} aggregates utilities across different regimes on a utility-scale. However, the differences between utilities in various regimes can be significant, and the optimal portfolio may be primarily influenced by performance in specific regimes. Utility functions are inherently relative, lacking nominal attributes, and are not directly comparable to other value functions. Nonetheless, they can be compared through certainty equivalents, which are expressed in monetary terms. Inspired by \cite{desmettre2023equilibrium}, we reformulate the optimal problem under beliefs-dependent utilities by aggregating the expected certainty equivalents on a monetary-scale, specifically:
\begin{equation}\label{problem:2}
\max_\pi
\mathbb{E}\left[(U^{\epsilon_T})^{-1}\left(\mathbb{E}\left[U^{\epsilon_T}\left(X^\pi_T\right)\,|\,\mathcal{F}_t,\epsilon_T\right]\right)\,|\,\epsilon_t\right].
\end{equation} 
By incorporating expected certainty equivalents, the objective in Problem \eqref{problem:2} loses its linearity compared to Problem \eqref{problem:1}, leading to time-inconsistency. As a result, maximizing Problem \eqref{problem:2} becomes exceedingly challenging. To tackle the issue of time-inconsistency, the intra-personal equilibrium approach under time-consistent planning is often employed. This paper therefore introduces the concept of Nash equilibrium to find an equilibrium solution to the optimization problem, rather than pursuing a classical optimal solution that merely maximizes the objective function. The equilibria authorize the investor to take into account the behavior of future selves and to seek the control strategy by game-theoretic thinking, thus ensuring the time-consistency of the chosen strategy.

The concept of equilibrium strategy originates from \cite{strotz1955myopia}, who demonstrated that sophisticated investors inherently adopt an equilibrium investment strategy. The formal mathematical definition of equilibrium investment strategies in continuous time was later established by \cite{ekeland2006being}, \cite{ekeland2008investment}, and \cite{ekeland2012time}, with non-exponential discounting serving as a key example. Furthermore, equilibrium strategies have been applied to the mean-variance problem, as explored by \cite{bjork2014theory}, \cite{bjork2014mean}, and \cite{bjork2017time}. Both the stochastic risk aversion model in \cite{desmettre2023equilibrium} and the smooth ambiguity preference model in \cite{guan2024equilibrium} face the challenge of time-inconsistency due to the non-linearity of their objective functions, and both address this issue through an equilibrium strategy approach. 

In this paper, we consider a financial market consisting of a risk-free asset and a risky asset. Both the market coefficients and the utilities depend on the market regime $\epsilon$. In the case of beliefs-dependent utilities, Problem \eqref{problem:2} exhibits time-inconsistency. By analyzing Problem \eqref{problem:2} under regime-switching, this paper makes the following main contributions:

(1) To our knowledge, most literature on beliefs-dependent utilities aggregates performances across different regimes on a utility scale, focusing on Problem \eqref{problem:1}. However, because utility functions are not directly comparable, this paper aggregates performances across  different regimes on a monetary scale. A similar approach is taken in \cite{desmettre2023equilibrium}, which addresses an equilibrium portfolio selection problem with random risk aversion but does not incorporate regime-switching. When \(\epsilon_T\) is treated as a static random variable and market coefficients are constant, our work reduces to the formulation presented in \cite{desmettre2023equilibrium}. In contrast, our paper introduces regime-switching and links random risk aversion to the regime, leading to greater mathematical complexity while providing a betetr economic interpretation. (2)  Unlike Problem \eqref{problem:1}, Problem \eqref{problem:2} is time-inconsistent. By introducing equilibrium strategies, this paper addresses the time-inconsistency arising from the certainty equivalents in the objective function. We derive the extended HJB equations and provide a rigorous verification theorem. (3) Additionally, we analyze a concrete example within a preference regime model of bull and bear markets, where \(U^{\epsilon_T}(\cdot)\) follows a CRRA utility function and \(\epsilon_t \in \{1, 2\}\) takes on two values. Using the separation of variables, we obtain a closed-form solution expressed by four-dimensional non-linear ODEs. We demonstrate the global existence of these ODEs and rigorously verify the equilibrium solution. (4) Due to the lack of homogeneity in beliefs-dependent utilities in Problem \eqref{problem:1}, obtaining closed-form solutions is challenging, leading most research to rely on numerical methods (see, e.g., \cite{chabi2008state}, \cite{berrada2018asset}, \cite{li2022time}). To our knowledge, \cite{sotomayor2009explicit} is the only paper that provides explicit solutions for beliefs-dependent utilities in a specific case. However, the beliefs-dependent utilities in \cite{sotomayor2009explicit} are restricted to a constant multiplied by a root function, with the constant varying across regimes. Consequently, the risk aversion coefficients remain the same across different regimes, failing to capture the differing risk aversion attitudes of investors during bear and bull markets. Thus, the results from \cite{sotomayor2009explicit} indicate that utility differences do not influence optimal investment strategies across regimes. Modifying Problem \eqref{problem:1} to Problem \eqref{problem:2}, we can maintain homogeneity and derive explicit solutions, enabling us to analyze the effects of varying relative risk aversion coefficients on equilibrium strategies across different regimes.  (5) When $U^{\epsilon_T}(\cdot)=U(\cdot)$ does not depend on $\epsilon_T$ and is belief independent, we see that although the objective functions in Problems \eqref{problem:1} and \eqref{problem:2} are different, the solutions are the same and an explanation is provided. In the expected utility maximization problem under regime-switching, the optimal strategy is typically represented by Merton's fraction multiplied by a constant. In contrast, our work expresses the equilibrium investment strategy as Merton's fraction multiplied by a deterministic function that increases over time in a bull regime and decreases over time in a bear regime in the numerical experiment. This equilibrium investment strategy falls between two Merton's fractions. Additionally, our numerical results show that the equilibrium proportion allocated to the risky asset is greater in a bull regime than in a bear regime.

The remainder of the paper is organized as follows. Section \ref{sec2} presents the financial model and the optimization problem. Section \ref{sec3} establishes a verification theorem. Section \ref{sec4} provides closed-form results in the CRRA preference model of bull and bear markets. Section \ref{sec5} presents the numerical examples and economic analysis.

\section{\bf Financial model and problem formulation}\label{sec2}
Let $(\Omega,\mathcal{F},\{\mathcal{F}_t\}_{t\in[0,T]},\mathbb{P})$ be a filtered complete probability space,  where constant $T>0$ is the time horizon and the filtration $\{\mathcal{F}_t\}_{t\in[0,T]}$ satisfies the usual conditions, representing the whole information of the financial market. All the processes introduced below are supposed to be well defined and adapted to the filtration $\{\mathcal{F}_t\}_{t\in[0,T]}$. 
\subsection{\bf The financial model}
This paper considers a financial market under regime-switching. Let $ \epsilon= \{\epsilon_t\}_{t\in[0,T]}$ be a continuous-time, stationary, finite-state Markov chain adapted to the filtration $\{\mathcal{F}_t\}_{t\in[0,T]}$  with the state space $S=\{1 ,2,\cdots , N\}$. Here $\epsilon_t$ represents the regime of the financial market at time $t$, and $N$ is the number of regimes. Furthermore, let $Q=[q_{ij}]_{N\times N}$ be the generator of the continuous-time Markov chain$\{\epsilon_t\}_{t\in[0,T]}$, where
$$q_{ii}=-\lambda_i\le0;\;\ \  q_{ij}\ge0,\;\ \  \forall\,i\ne j;\;\ \  \sum_{j\in S}q_{ij}=0.$$

The financial market consists of two assets: one bond (risk-free asset) and one stock (risky asset). Let the $\{\mathcal{F}_t\}_{t\in[0,T]}$-adapted processes $P_0=\{P_0(t)\}_{t\in[0,T]}$ and $P_1=\{P_1(t)\}_{t\in[0,T]}$ represent the price of the bond and the stock, respectively. They satisfy the stochastic differential equations (abbr. SDEs): 
	\begin{equation}
        \left\{
	    \begin{aligned}
	    & d P_0(t)=r_{\epsilon_t}P_0(t)dt, & P_0(0)=1,\\
		& d P_1(t)=P_1(t)(\mu_{\epsilon_t}d t+\sigma_{\epsilon_t} d W_t),  &P_1(0)=p_1>0,
	    \end{aligned}
	    \right.
	\end{equation}
where $W=\{W_t\}_{t\in[0,T]}$ is a standard Brownian motion with respect to filtration  $\{\mathcal{F}_t\}_{t\in[0,T]}$ and $r_i,r_i;\sigma_i,\sigma_i;\mu_i,\mu_i>0,i\in S$, represent the expected rate of returns on the risk-free assets, the volatility of the risky assets and the rates of return for the risky asset under the market regime $i$, respectively. We assume that the stochastic processes $W$ and $\epsilon$ are independent.

\begin{remark}
    Considering that $\{\epsilon_t\}_{t\in[0,T]}$ represents the overall market state, directly influencing the expected returns and volatility of assets in the market, and $\{W_t\}_{t\in[0,T]}$ specifically affects the volatility of the risky asset given the regime, it is reasonable to assume that  $W$ and $\epsilon$  are independent.
\end{remark}

\subsection{\bf The wealth process and trading strategy}
Let the function $$\pi:(t,x,i)\to \pi(t,x,i)$$ denote the trading strategy, which represents  the dollar amount invested  in the risky asset at time $t$, when the total wealth is $x$ and the market regime is $i$. The wealth process under the trading  strategy $\pi$ is denoted by $X^{\pi}$, satisfying the following SDE:
\begin{equation}\label{wealth}
    d X_t^\pi=r_{\epsilon_t} X_t^\pi d t+(\mu_{\epsilon_t}-r_{\epsilon_t})\pi(t,X_t^\pi,\epsilon_t) d t+\pi(t,X_t^\pi,\epsilon_t)\sigma_{\epsilon_t} d W_t,\qquad 0\le t\le T.
\end{equation}
Let 
\begin{equation}
    \Pi_0=\{\pi\, |\,  \pi:  [0,T]\times \mathbb{R}\times S\to \mathbb{R} \  \mbox{such that SDE \eqref{wealth} has a unique  solution} \;X^\pi\}.
\end{equation}
$\Pi\subseteq \Pi_0$ represents the set of admissible trading strategies $\pi$. An admissible strategy should also satisfy additional conditions, which are related to the specific form of the optimization problem. These conditions are temporarily omitted here and will be provided later in the context of the specific problem.

\subsection{\bf Optimal problem and equilibrium strategy}
Let $u^j:[0,\infty)\to\mathbb{R},j\in S$, denote the utility function under the market regime $j$, where $u^j$ is a strictly concave, strictly increasing and differential function, $\forall \; j\in S $.
This paper considers the following objective function:
\begin{equation}\label{objective function}
    J^\pi(t,x,i)\mathop{=}\limits^{\Delta}\sum_{j\in S}p(t,i,j)(u^j)^{-1}(\mathbb{E}_{t,x,i,j}[u^j(X^\pi_T)]),
\end{equation}
where $\mathbb{E}_{t,x,i,j}(\cdot)=\mathbb{E}[\cdot\,|\,X_t=x,\,\epsilon_t=i,\,\epsilon_T=j]$ and $p(t,i,j)=\mathbb{P}(\epsilon_T=j\,|\,\epsilon_t=i)$. The objective function \eqref{objective function} represents the weighted average of the certainty equivalents of the
terminal wealth across different market regimes. We consider maximizing the objective function, i.e., to consider the following optimization problem:
\begin{equation}\label{IP}
    \mathop{sup}\limits_{\pi \in \Pi}\left\{J^\pi(t,x,i)\right\}.
\end{equation}
However, this problem is time-inconsistent due to the  certainty equivalents, making it extremely difficult to directly maximize $J^\pi (t,x,i)$. Therefore, this paper considers the equilibrium solution to this problem rather than maximizing it  based on  the concepts of equilibrium strategy proposed by  \cite{strotz1955myopia},  and  the mathematical approaches of equilibrium established by   \cite{ekeland2006being}, \cite{ekeland2008investment}, and \cite{ekeland2012time}.
\begin{definition}
(Equilibrium strategy). Consider an admissible strategy $\hat{\pi}\in\Pi$. We say $\hat{\pi}$ is an equilibrium strategy, if for every $(t,x,i)\in[0,T]\times \mathbb{R}\times S$ and $\pi\in\Pi$, whenever $\pi_h\in\Pi$ for all sufficiently small $h>0$, we have
\begin{equation}\label{op1}
    \mathop{limsup}\limits_{h\to 0}\frac{J^{\pi_h}(t,x,i)-J^{\hat\pi}(t,x,i)}{h}\le 0,
\end{equation}
where$$ \pi_h(s)\mathop{=}\limits^{\Delta}\left\{
	\begin{aligned}
		\pi(s) & \quad s\in[t,t+h), \\
		\hat{\pi}(s) & \quad s\in[t+h,T]. 
	\end{aligned}
	\right.$$
\end{definition}
This paper is to determine the equilibrium strategy for the investment problem (\ref{IP}).
\begin{remark}
     \cite{rieder2005portfolio} investigated portfolio optimization problems under regime-\\ switching model. However, their utility function is independent of the terminal regime, resulting in the time-consistency. Therefore it is feasible to utilize dynamic programming principles to derive the Hamilton-Jacobi-Bellman equation and find the optimal trading strategy that maximizes expected utility. In contrast, the objective function in our study is non-linear due to  the  certainty equivalents, which yields the time-inconsistency. In addition, our utility function also  depends on the terminal regime. From a practical perspective, when the market is bullish, investors tend to exhibit lower risk aversion compared to periods of market downturn. Therefore,  the model in this paper is more realistic, and completely different from that  of \cite{rieder2005portfolio} .
\end{remark}
\begin{remark}
     \cite{desmettre2023equilibrium} explored the problem of random risk aversion. Although they did not consider a model with regime-switching, their study synthesized the impact of different utility functions at the terminal time by employing the certainty equivalent approach. While leading to time-inconsistency for maximizing the objective function, this approach provided stronger economic insights. The certainty equivalent also ensured the ``homogeneity'' of the objective function, providing mathematical convenience in solving the corresponding differential equations. Our paper considers beliefs-dependent utility functions, i.e., the utility functions depend on market regimes at time $T$. 
\end{remark}
\vskip 10pt
\section{\bf Verification Theorem}\label{sec3}
In this section, we derive the extended HJB equations of Problem \eqref{IP} and present a rigorous verification theorem.
\subsection{\bf Verification theorem}For every $\pi\in\Pi$, the infinitesimal generator $\mathcal{A}^{\pi}$ for the process $X^{\pi}$ defined by  SDE \eqref{wealth} is 
$$\begin{aligned}
\mathcal{A}^{\pi}f^{i,j}(t,x)&\mathop{=}\limits^{\Delta}f_t^{i,j}(t,x)+\frac{1}{2}f^{i,j}_{xx}(t,x)\sigma_i^2\pi^2(t,x,i)+f^{i,j}_x(t,x) (\mu_i-r_i)\pi(t,x,i)
\\&+f^{i,j}_x(t,x) xr_i-\lambda_i f^{i,j}(t,x)+\sum_{k\in S\backslash \{i\}}q_{ik}f^{k,j}(t,x)
\end{aligned}$$
 for $\{f^{i,j}(t,x)\}_{i,j\in S}\subset C^{1,2}([0,T)\times \mathbb{R}^+)\cap C([0,T]\times \mathbb{R}^+)$.

We make the following assumptions on functions $\{f^{i,j}(t,x)\}_{i,j\in S}$ , which will be used later in the verification theorem.
\begin{assumption}\label{assume1}
$\forall \pi \in \Pi,\;(t,x,i,j)\in[0,T)\times \mathbb{R}^+ \times S^2,\;\exists \,\hat{t}\in(t,T)$ such that,  under the conditional probability $\mathbb{P}_{t,x,i,j}$, the process $$\left\{\int^s_t f_x^{\epsilon_u,j}(u,X_u^\pi)\pi(u,X_u^\pi,\epsilon_u)\sigma_{\epsilon_u}d W_u\right\}_{s\in[t,\hat{t}]}$$
is a supermartingale  with respect to $\{\mathcal{F}_t\}_{t\in[0,T]}$.
\end{assumption}
\begin{assumption}\label{assume2}
$\forall \pi \in \Pi, (t,x,i,j)\in[0,T)\times \mathbb{R}^+ \times S^2,\;\exists \,\hat{t}\in(t,T)$ such that, under the conditional probability $\mathbb{P}_{t,x,i,j}$ , the process
$$\left\{\frac{1}{h} \int^{t+h}_t \mathcal{A}^\pi f^{\epsilon_u,j}(u,X_u^\pi)d u\right\}_{h\in(0,\hat{t}-t]}$$
is uniformly integrable.
\end{assumption}
\begin{assumption}\label{assume3}
$\forall \pi \in \Pi,\;(t,x,i,j)\in[0,T)\times \mathbb{R}^+ \times S^2$, we have
	$$\mathop{lim}\limits_{h\to 0^+} \mathbb{E}_{t,x,i,j}[f^{\epsilon_{t+h},j}(t+h,X^\pi(t+h))]=f^{i,j}(t,x).$$
\end{assumption}
%\begin{remark}
    It is necessary to note that, compared to standard stochastic control problems, the conditional probability $\mathbb{P}_{t,x,i,j}$ includes the information about $\epsilon_T = j$ at the terminal time. For the subsequent derivations, it is needed to clarify that under the conditional probability $\mathbb{P}_{t,x,i,j}$, $ W=\{W_t\}_{t\in[0,T]}$ remains a standard Brownian motion with respect to $\{\mathcal{F}_t\}_{t\in[0,T]}$. This fact can be proven by using the independent increment property of $\{W_t\}_{t\in[0,T]}$ and the fact that $W$ and $\epsilon$ are independent.
%\end{remark}
%\subsection{\bf The Verification Theorem}
We now state the verification theorem as follows:
\begin{theorem}\label{verification}(Verification Theorem)
Assume that  a family of functions $$\{f^{i,j}(t,x)\}_{i,j\in S}\subset C^{1,2}([0,T)\times \mathbb{R}^+)\cap C([0,T]\times \mathbb{R}^+)$$ and an admissible strategy $\hat\pi\in\Pi$ satisfy the following conditions:
	\\$(a)\;\forall (t,x,i,j)\in[0,T)\times \mathbb{R}^+ \times S^2$, the extended HJB equations hold
 \begin{eqnarray}\label{(a)}
 \left\{
	\begin{aligned}
	   	    &\mathop{sup}\limits_{\pi\in\Pi} \left\{ \sum_{k\in S}((u^k)^{-1})^{'}(f^{i,k}(t,x))(\mathcal{A}^\pi f^{i,k}(t,x))p(t,i,k)\right\}=0,  \\
	&\mathcal{A}^{\hat\pi}f^{i,j}(t,x)=0,\\
    &f^{i,j}(T,x)=u^j(x); 
	\end{aligned} \right.
	\end{eqnarray}
$(b)\;\forall (t,x,i,j)\in[0,T)\times \mathbb{R}^+  \times S^2$, the process $$\left\{\int^s_t f_x^{\epsilon_u,j}((t,X^{\hat\pi}_u)\hat\pi(t,X^{\hat\pi}_u,\epsilon_u)\sigma_{\epsilon_u}d W_u\right\}_{s\in[t,T]}$$ is a martingale with respect to $\{\mathcal{F}_t\}_{t\in[0,T]}$;
\\$(c)\; \mbox{  the family of the functions } \{f^{i,j} (t,x)\}_{i,j\in S}$ satisfies Assumptions  \ref{assume1} - \ref{assume3}.
\\Then $\hat\pi$ is an equilibrium strategy,  the function $ f^{i,j} (t,x)$ is 
\begin{equation}
	    f^{i,j} (t,x)=\mathbb{E}_{t,x,i,j}[u^j(X^{\hat\pi}_T)],\;\forall\,(t,x,i,j)\in[0,T)\times \mathbb{R}^+ \times S^2, 
	\end{equation}
and  the objective function under the equilibrium strategy $\hat\pi$ is
\begin{equation}
	    J^{\hat\pi} (t,x,i)=\sum_{j\in S }p(t,i,j)(u^j)^{-1}(f^{i,j} (t,x)),\;\forall\,(t,x,i)\in[0,T)\times \mathbb{R}^+ \times S.
	\end{equation}
\end{theorem}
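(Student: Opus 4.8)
The plan is to verify two things separately: first, the representation $f^{i,j}(t,x)=\mathbb{E}_{t,x,i,j}[u^j(X_T^{\hat\pi})]$, and then the equilibrium property \eqref{op1}. For the representation, I would fix $(t,x,i,j)$ and apply It\^o's formula for regime-switching diffusions to the process $s\mapsto f^{\epsilon_s,j}(s,X_s^{\hat\pi})$ under the conditional measure $\mathbb{P}_{t,x,i,j}$. Since $W$ remains a Brownian motion under $\mathbb{P}_{t,x,i,j}$ (as noted in the excerpt), the drift term of this process is exactly $\mathcal{A}^{\hat\pi}f^{\epsilon_s,j}(s,X_s^{\hat\pi})$, which vanishes by the second equation in \eqref{(a)}; the martingale part is the stochastic integral in condition $(b)$, which is a genuine martingale. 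Taking conditional expectation from $t$ to $T$ and using the terminal condition $f^{i,j}(T,x)=u^j(x)$ gives $f^{i,j}(t,x)=\mathbb{E}_{t,x,i,j}[u^j(X_T^{\hat\pi})]$. The formula for $J^{\hat\pi}(t,x,i)$ then follows immediately by plugging this into the definition \eqref{objective function}.

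For the equilibrium property, fix $(t,x,i)$ and a perturbation $\pi_h$ which equals $\pi$ on $[t,t+h)$ and $\hat\pi$ afterwards. The key is to express $J^{\pi_h}(t,x,i)$ in terms of the functions $f^{i,j}$ evaluated at time $t+h$. Because $\pi_h$ agrees with $\hat\pi$ on $[t+h,T]$, conditioning at time $t+h$ and using the representation already established gives
\begin{equation*}
\mathbb{E}_{t,x,i,j}[u^j(X_T^{\pi_h})] = \mathbb{E}_{t,x,i,j}\left[f^{\epsilon_{t+h},j}(t+h,X_{t+h}^{\pi_h})\right].
\end{equation*}
I would then apply It\^o's formula to $f^{\epsilon_s,j}(s,X_s^{\pi_h})$ on $[t,t+h]$ under $\mathbb{P}_{t,x,i,j}$, where now the drift is $\mathcal{A}^{\pi}f^{\epsilon_s,j}$ (using the $\pi$-dynamics on this interval). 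Assumption \ref{assume1} makes the stochastic integral a supermartingale, so
\begin{equation*}
\mathbb{E}_{t,x,i,j}[u^j(X_T^{\pi_h})] \le f^{i,j}(t,x) + \mathbb{E}_{t,x,i,j}\left[\int_t^{t+h}\mathcal{A}^{\pi}f^{\epsilon_u,j}(u,X_u^{\pi})\,du\right].
\end{equation*}
Dividing by $h$, letting $h\to 0^+$, and using Assumption \ref{assume2} (uniform integrability to pass the limit inside) together with Assumption \ref{assume3} and the continuity/regularity of $f$, the difference quotient of the integral converges to $\mathcal{A}^{\pi}f^{i,j}(t,x)$.

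The final step assembles these estimates into \eqref{op1}. Applying $(u^j)^{-1}$ and a first-order Taylor expansion around $f^{i,j}(t,x)$ — here using that $f^{i,j}(t,x)=\mathbb{E}_{t,x,i,j}[u^j(X_T^{\hat\pi})]$ so the expansion is centered correctly — one gets, for each $j$,
\begin{equation*}
\limsup_{h\to 0}\frac{(u^j)^{-1}\!\left(\mathbb{E}_{t,x,i,j}[u^j(X_T^{\pi_h})]\right) - (u^j)^{-1}\!\left(f^{i,j}(t,x)\right)}{h} \le ((u^j)^{-1})'(f^{i,j}(t,x))\,\mathcal{A}^{\pi}f^{i,j}(t,x).
\end{equation*}
Multiplying by $p(t,i,j)$, summing over $j\in S$, and recalling that $p(t,i,\cdot)$ is a genuine probability so the continuity of $p$ in $t$ causes no trouble, the right-hand side becomes $\sum_{k\in S}((u^k)^{-1})'(f^{i,k}(t,x))\mathcal{A}^{\pi}f^{i,k}(t,x)p(t,i,k)$, which is $\le 0$ by the supremum equation in \eqref{(a)}. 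This yields $\limsup_{h\to 0}\frac{J^{\pi_h}(t,x,i)-J^{\hat\pi}(t,x,i)}{h}\le 0$, i.e. $\hat\pi$ is an equilibrium strategy. The main obstacle I anticipate is the rigorous justification of interchanging limit, conditional expectation, and the Taylor expansion of $(u^j)^{-1}$ simultaneously — in particular controlling the second-order remainder in the Taylor expansion uniformly in $h$ and ensuring the difference quotient $\frac1h\int_t^{t+h}\mathcal{A}^\pi f^{\epsilon_u,j}(u,X_u^\pi)\,du$ converges in $L^1$ rather than merely a.s.; this is exactly what Assumptions \ref{assume1}--\ref{assume3} are designed to handle, and the proof will need to invoke them carefully rather than treat the convergence as automatic.
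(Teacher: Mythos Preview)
Your outline matches the paper's proof almost step for step: It\^o under $\mathbb{P}_{t,x,i,j}$ for the representation of $f^{i,j}$, conditioning at $t+h$ to reduce $J^{\pi_h}$ to the $f$-values at time $t+h$, It\^o again on $[t,t+h]$ with Assumption~\ref{assume1} for the one-sided bound, and Assumption~\ref{assume2} for the limit of the averaged drift. The one place where you diverge from the paper is the handling of $(u^j)^{-1}$, and this is where a genuine gap appears.

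You propose a first-order Taylor expansion of $(u^j)^{-1}$ \emph{centred at} $a:=f^{i,j}(t,x)$ and list controlling the second-order remainder as the main obstacle. Writing $b:=\mathbb{E}_{t,x,i,j}[f^{\epsilon_{t+h},j}(t+h,X^\pi_{t+h})]$, Taylor at $a$ gives
\[
\frac{(u^j)^{-1}(b)-(u^j)^{-1}(a)}{h}=((u^j)^{-1})'(a)\,\frac{b-a}{h}+\frac{1}{2h}((u^j)^{-1})''(\xi)(b-a)^2,
\]
and since $(u^j)^{-1}$ is convex the remainder is \emph{nonnegative}, so it sits on the wrong side of the inequality you need. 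To discard it you would need $(b-a)^2/h\to 0$, but Assumption~\ref{assume1} is only a supermartingale condition, so you control $(b-a)/h$ from above, not $|b-a|/h$; the remainder cannot be bounded with the stated hypotheses.

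The paper sidesteps this by using convexity directly rather than Taylor: for convex $\phi=(u^j)^{-1}$ one has the exact one-sided inequality $\phi(b)-\phi(a)\le \phi'(b)(b-a)$, with the derivative taken at $b$ instead of $a$. This yields
\[
\frac{(u^j)^{-1}(b)-(u^j)^{-1}(a)}{h}\le ((u^j)^{-1})'(b)\,\frac{b-a}{h}\le ((u^j)^{-1})'(b)\bigl(\mathcal{A}^\pi f^{i,j}(t,x)+o(1)\bigr),
\]
with no remainder to estimate. Assumption~\ref{assume3} is then invoked precisely to pass $((u^j)^{-1})'(b)\to ((u^j)^{-1})'(a)$ as $h\to 0^+$ by continuity. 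So the role of Assumption~\ref{assume3} is not part of a package controlling a Taylor remainder; it is what lets you move the point of evaluation of the derivative from $b$ back to $a$ after the convexity inequality has done its work. With this single adjustment your argument becomes the paper's.
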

\begin{proof}
%[\bf Proof of Theorem  \ref{verification}]
    First, fix $(t,x,i)\in [0,T)\times\mathbb{R}^+ \times S$. Using SDE \eqref{wealth} and applying  the Itô's formula for Markov-modulated processes (see, e.g., \cite{sotomayor2009explicit}) to the process  $\{f^{\epsilon_t,j} (t,X_t^{\hat\pi})\}$, we have \begin{equation}\label{ito1}
      d f^{\epsilon_t,j}(t,X_t^{\hat\pi})=\mathcal{A}^{\hat\pi}f^{\epsilon_t,j}(t,X_t^{\hat\pi})d t+f_x^{\epsilon_t,j}(t,X_t^{\hat\pi})\hat\pi(t,X^{\hat\pi}_t,\epsilon_t)\sigma_{\epsilon_t}d W_t+d M_t\;,\forall\, j\in S, 
      \end{equation}
      where the process $M=\{M_t: t\leq T \}$ is a square-integrable martingale. Integrating \eqref{ito1} over the interval $[0,T]$ and then taking the conditional expectation $\mathbb{E}_{t,x,i,j}[\cdot]$, using Conditions $(a)$ and $(b)$, we obtain 
      \begin{equation}\label{obj}
      f^{i,j} (t,x)=\mathbb{E}_{t,x,i,j}[u^j(X^{\hat\pi}_T)].
  \end{equation}
  As such,
  \begin{equation}\label{objective1}
      J^{\hat\pi} (t,x,i)=\sum_{j\in S}p(t,i,j)(u^j)^{-1}(f^{i,j} (t,x)).
  \end{equation}
  Similarly, considering $\forall\,\pi\in\Pi$ and the corresponding $\pi_h$, integrating \eqref{ito1} over the interval $[t+h,T]$, we also have  \begin{equation*}
      \mathbb{E}_{t,x,i,j}[u^j(X^{\pi_h}_T)]=\mathbb{E}_{t,x,i,j}[f^{\epsilon_{t+h},j}(t+h,X^\pi(t+h))],
  \end{equation*}
  then 
  \begin{equation}
  \begin{aligned}\label{objective2}
  J^{\pi_h}(t,x,i)&=\sum_{j\in S} p(t,i,j)(u^j)^{-1}(\mathbb{E}_{t,x,i,j}[u^j(X^{\pi_h}_T)])\\&=\sum_{j\in S} p(t,i,j)(u^j)^{-1}(\mathbb{E}_{t,x,i,j}[f^{\epsilon_{t+h},j}(t+h,X^\pi(t+h))]).
  \end{aligned}
  \end{equation}
Using the Itô's formula for Markov-modulated processes over $f^{\epsilon_t,j} (t,X_t^{\pi})$ again, 
\begin{equation}\label{ito2}
      d f^{\epsilon_t,j}(t,X_t^{\pi})=\mathcal{A}^{\pi}f^{\epsilon_t,j}(t,X_t^{\pi})d t+f_x^{\epsilon_t,j}(t,X_t^{\pi})\pi(t,X^{\pi}_t,\epsilon_t)\sigma_{\epsilon_t}d W_t+d N_t,\;\forall\; j\in S,
  \end{equation}
  where $N=\{N_t: t\leq T\}$ is a square-integrable martingale. 
  Integrating \eqref{ito2} over the interval $[t,t+h]$ and then taking the conditional expectation $\mathbb{E}_{t,x,i,j}[\cdot]$, combining  Assumption \ref{assume1}, we get \begin{equation}
      \mathbb{E}_{t,x,i,j}[f^{\epsilon_{t+h},j}(t+h,X^\pi(t+h))]-f^{i,j}(t,x)\le \mathbb{E}_{t,x,i,j}\Bigg[\int^{t+h}_t \mathcal{A}^\pi f^{\epsilon_u,j}(u,X_u^\pi)d u\Bigg].
  \end{equation}
  Moreover, using  Assumption \ref{assume2}, 
  \begin{equation*}
      \mathbb{E}_{t,x,i,j}[f^{\epsilon_{t+h},j}(t+h,X^\pi(t+h))]-f^{i,j}(t,x)\le h\mathcal{A}^\pi f^{i,j}(t,x)+o(h).
  \end{equation*}
  Using that $(u^j)^{-1}$ is convex, we have 
  \begin{equation}\label{calculus1}
      \begin{aligned}
      &\frac{(u^j)^{-1}(\mathbb{E}_{t,x,i,j}[f^{\epsilon_{t+h},j}(t+h,X^\pi(t+h))])-(u^j)^{-1}(f^{i,j}(t,x))}{h}
      \\
       \le& \frac{1}{h}((u^j)^{-1})^{'}(\mathbb{E}_{t,x,i,j}[f^{\epsilon_{t+h},j}(t\!+\!h,X^\pi(t\!+\!h))])(\mathbb{E}_{t,x,i,j}[f^{\epsilon_{t\!+\!h},j}(t\!+\!h,X^\pi(t\!+\!h))]-f^{i,j}(t,x))
       \\
       \le& ((u^j)^{-1})^{'}(\mathbb{E}_{t,x,i,j}[f^{\epsilon_{t+h},j}(t+h,X^\pi(t+h))])(\mathcal{A}^\pi f^{i,j}(t,x)+o(1)).
      \end{aligned}
  \end{equation}
  Therefore, based on Eq.~\eqref{objective1}, Ineqs \eqref{objective2} and \eqref{calculus1} as well as Assumption \ref{assume3}, we have
  \begin{equation}
      \mathop{limsup}\limits_{h\to 0}\frac{J^{\pi_h}(t,x,i)-J^{\hat\pi}(t,x,i)}{h}\le\sum_{j\in S}((u^j)^{-1})^{'}(f^{i,j}(t,x))(\mathcal{A}^{\pi}f^{i,j}(t,x))p(t,i,j).
  \end{equation}
  Finally, using Condition ($\ref{(a)}$), 
\begin{equation}
      \mathop{limsup}\limits_{h\to 0}\frac{J^{\pi_h}(t,x,i)-J^{\hat\pi}(t,x,i)}{h}\le 0,
  \end{equation}
  thus $\hat\pi$ is an equilibrium strategy, and other results easily follow from (\ref{obj}) and (\ref{objective1}).
\end{proof}
\subsection{\bf The equilibrium feedback function} Considering Condition (a) of Theorem \ref{verification}, we get  
\begin{equation}\label{strategy1}
    \hat\pi\in\mathop{argsup}\limits_{\pi\in\Pi}\left\{\sum_{j\in S}((u^j)^{-1})^{'}(f^{i,j}(t,x))(\mathcal{A}^\pi f^{i,j}(t,x))p(t,i,j)\right\}.
\end{equation}
The right-hand side of \eqref{strategy1} is a quadratic function of  $\pi$. We make the following assumption:
\begin{assumption}\label{assume4}
$\forall (t,x,i,j)\in[0,T)\times \mathbb{R}^+ \times S^2,\ f^{i,j}_{xx}(t,x)<0$.
\end{assumption}
If Assumption \ref{assume4} holds, then, based on (\ref{strategy1}), the coefficient of the quadratic term is 
\begin{equation}
    \frac12\sum_{j\in S}(u^j)^{-1})^{'}(f^{i,j}(t,x))f^{i,j}_{xx}(t,x)\sigma_i^2p(t,i,j)<0.
\end{equation} 
Maximizing the right-hand side of \eqref{strategy1} and then using the first-order condition, we have that a candidate equilibrium strategy $\hat\pi$ satisfies \begin{equation}\label{strategy3}
    \hat\pi(t,x,i)=-\frac{\mu_i-r_i}{\sigma_i^2}\frac{\sum_{j\in S} p(t,i,j)((u^j)^{-1})^{'} (f^{i,j}(t,x))f_x^{i,j}(t,x)}{\sum _{j\in S} p(t,i,j)((u^j)^{-1})^{'} (f^{i,j}(t,x))f_{xx}^{i,j}(t,x)}.
\end{equation}
In what follows, we will use the expression  \eqref{strategy3} and verification theorem \ref{verification} to show in detail that the candidate  $\hat\pi$  is the equilibrium strategy of the original problem (\ref{IP})-(\ref{op1}) in the context of specific examples. 
\vskip 15pt
\section{\bf A Special Case: Two regimes and CRRA utility}\label{sec4}
In this section, we solve the problem (\ref{IP})-(\ref{op1})  in a CRRA preference model of bull and bear markets.
\subsection{\bf The special model} 

Now we consider the case of two regimes, i.e., $N=2,S=\{1,2\}$, where $\epsilon_t=1,2$ represent bull market and bear market, respectively. Additionally, we assume that the utility function $u^j$, $j\in S$,  is constant-relative-risk aversion (CRRA case), i.e.,
$$u^1(x)=\frac{1}{1-\alpha_1}x^{1-\alpha_1},\;u^2(x)=\frac{1}{1-\alpha_2}x^{1-\alpha_2},$$ 
where $\alpha_1,\alpha_2\in\mathbb{R}^+\backslash\{1\} $ are coefficients of relative-risk-aversion. 
\begin{definition}\label{admissible}
Given the above conditions: $N,\  S, \   u^i,\  i=1,2$, we say that  $\pi : [0,T]\times \mathbb{R} \times S\to\mathbb{R}$ is an admissible strategy, if it satisfies the following two conditions:\\
(i) For every $(t,x,i)\in [0,T)\times \mathbb{R}^+  \times S$, SDE \eqref{wealth} has a unique solution $X^\pi$, and $X_s^\pi>0\;a.s.,\,\forall s\in[t,T]$;\\
(ii) There exists a constant $C>0$ such that for every $(t,x,i)\in [0,T]\times \mathbb{R} \times S
$, $|\pi(t,x,i)|\le C|x|.$
\end{definition}
Let $\Pi$ denote the set of all admissible strategies.
In order to obtain the equilibrium strategy, based on the verification theorem \ref{verification} and \eqref{strategy1}, we need to solve the following boundary value problem for a system of partial differential equations (PDEs): 
\begin{eqnarray}\label{pdes}
\left\{
\begin{aligned}
    &\mathcal{A}^{\hat\pi}f^{i,j}=0,\;\forall i,j\in\{1,2\},\\
    &f^{i,j}(T,x)=\frac{1}{1-\alpha_j}x^{1-\alpha_j},\;\forall i,j\in\{1,2\},
\end{aligned}\right.
\end{eqnarray} 
where
\begin{equation*}
    \hat\pi(t,x,i)=-\frac{\mu_i-r_i}{\sigma_i^2}\frac{p(t,i,1)((u^1)^{-1})^{'} (f^{i,1})f_x^{i,1}+p(t,i,2)((u^2)^{-1})^{'} (f^{i,2})f_x^{i,2}}{p(t,i,1)((u^1)^{-1})^{'} (f^{i,1})f_{xx}^{i,1}+p(t,i,2)((u^2)^{-1})^{'} (f^{i,2})f_{xx}^{i,2}}.
\end{equation*}
\subsection{\bf Ansatz} We make the ansatz for  the solution of the PDEs \eqref{pdes}, i.e.,  Functions $\{f^{i,j}(t,x)\}_{i,j\in S}$ has the following  form:
\begin{equation}\label{solution1}
    f^{i,j}(t,x)=\frac{1}{1-\alpha_j}x^{1-\alpha_j}(g^{i,j}(t))^{\alpha_j},\;\forall\,i,j\in S.
\end{equation}
Then \begin{equation}\label{strategy2}
    \hat\pi(t,x,i)=\frac{\mu_i-r_i}{\sigma_i^2}\, x \, \frac{p(t,i,1)(g^{i,1}(t))^{\frac{\alpha_1}{1-\alpha_1}}+p(t,i,2)(g^{i,2}(t))^{\frac{\alpha_2}{1-\alpha_2}}}{\alpha_1 p(t,i,1)(g^{i,1}(t))^{\frac{\alpha_1}{1-\alpha_1}}+\alpha_2 p(t,i,2)(g^{i,2}(t))^{\frac{\alpha_2}{1-\alpha_2}}}.
\end{equation}
Define
\begin{equation}
A_i(t)\mathop{=}\limits^{\Delta}\frac{p(t,i,1)(g^{i,1}(t))^{\frac{\alpha_1}{1-\alpha_1}}+p(t,i,2)(g^{i,2}(t))^{\frac{\alpha_2}{1-\alpha_2}}}{\alpha_1 p(t,i,1)(g^{i,1}(t))^{\frac{\alpha_1}{1-\alpha_1}}+\alpha_2 p(t,i,2)(g^{i,2}(t))^{\frac{\alpha_2}{1-\alpha_2}}},\;\forall\; i\in S.
\end{equation}
Then the boundary value problem for the PDEs \eqref{pdes}  is transformed into the following initial value problem for a system of non-linear ODEs:
\begin{equation}\label{odes1}
   \!\!\!\!  \!\!   \!\!  \left\{
	\begin{aligned}
		&\frac{\alpha_1}{1\!-\!\alpha_1}g_t^{1,1}(t)\!=\!\Bigg\{\frac{1}{2}\alpha_1 (\frac{\mu_1\!-\!r_1}{\sigma_1})^2 A_1^2(t)\!-\!(\frac{\mu_1\!-\!r_1}{\sigma_1})^2 A_1(t)
		\!-\!r_1\!+\!\frac{\lambda_1}{1\!-\!\alpha_1}\!\Big[1\!-\!(\frac{g^{2,1}(t)}{g^{1,1}(t)})^{\alpha_1}\Big]\Bigg\}g^{1,1}(t),
		\\
		&\frac{\alpha_1}{1\!-\!\alpha_1}g_t^{2,1}(t)\!=\!\Bigg\{\frac{1}{2}\alpha_1 (\frac{\mu_2\!-\!r_2}{\sigma_2})^2A_2^2(t)\!-\!(\frac{\mu_2\!-\!r_2}{\sigma_2})^2 A_2(t)
		\!-\!r_2\!+\!\frac{\lambda_2}{1\!-\!\alpha_1}\Big[1\!-\!(\!\frac{g^{1,1}(t)}{g^{2,1}(t)})^{\alpha_1}\Big]\Bigg\}g^{2,1}(t),  
		\\
		&\frac{\alpha_2}{1\!-\!\alpha_2}g_t^{1,2}(t)\!=\!\Bigg\{\frac{1}{2}\alpha_2 (\frac{\mu_1\!-\!r_1}{\sigma_1})^2\! A_1^2(t)\!-\!(\frac{\mu_1\!-\!r_1}{\sigma_1})^2 A_1(t)
		\!-\!r_1\!+\!\frac{\lambda_1}{1\!-\!\alpha_2}\Big[1\!-\!(\frac{g^{2,2}(t)}{g^{1,2}(t)})^{\alpha_2}\Big]\Bigg\}g^{1,2}(t), 
		\\
		&\frac{\alpha_2}{1\!-\!\alpha_2}g_t^{2,2}(t)\!=\!\Bigg\{\frac{1}{2}\alpha_2 \!(\frac{\mu_2\!-\!r_2}{\sigma_2})^2A_2^2(t)\!-\!(\frac{\mu_2\!-\!r_2}{\sigma_2})^2 A_2(t)
		\!-\!r_2\!+\!\frac{\lambda_2}{1\!-\!\alpha_2}\Big[1\!-\!(\!\frac{g^{1,2}(t)}{g^{2,2}(t)})^{\alpha_2}\Big]\Bigg\}g^{2,2}(t),
		\\&g^{1,1}(T)=g^{2,1}(T)=g^{1,2}(T)=g^{2,2}(T)=1,
	\end{aligned}
	\right.
\end{equation}
where if $\lambda_1^2+\lambda_2^2\ne0,$
\begin{equation}\label{probability}
  \!\!  \!\!  \!\!   \!\!  \!\!  \!\!  \!\!   \!\!  \!\!  \!\!  \!\!  \!\!  \left\{
	\begin{aligned}
		p(t,1,1)&=\frac{\lambda_2}{\lambda_1+\lambda_2}+ \frac{\lambda_1}{\lambda_1+\lambda_2}e^{-(\lambda_1+\lambda_2)(T-t)},
		\\
		p(t,1,2)&=\frac{\lambda_1}{\lambda_1+\lambda_2}\big[1-e^{-(\lambda_1+\lambda_2)(T-t)}\big],
		\\
		p(t,2,1)&= \frac{\lambda_2}{\lambda_1+\lambda_2}\big[1-e^{-(\lambda_1+\lambda_2)(T-t)}\big],
		\\
		p(t,2,2)&=\frac{\lambda_1}{\lambda_1+\lambda_2}+ \frac{\lambda_2}{\lambda_1+\lambda_2}e^{-(\lambda_1+\lambda_2)(T-t)}.
	\end{aligned}
	\right.
\end{equation}
and if $\lambda_1=\lambda_2=0$, $p(t,1,1)=p(t,2,2)=1,\,p(t,1,2)=p(t,2,1)=0$.
\subsection{\bf The existence of the global solution of ODEs \eqref{odes1}}
 ODEs \eqref{odes1} is a non-linear system, which does not admit explicit solution. In this subsection, we study the global existence and uniqueness of solution to ODEs \eqref{odes1}. We have the following.
\begin{theorem} \label{odesexistence}
ODEs \eqref{odes1} has a unique global solution on $(-\infty,T]$. Besides, the solution \\ $\{g^{i,j}(t)\}_{i,j\in S}$ satisfies
\begin{equation}\label{gconditon}
g^{i,j}\in C^1((-\infty,T]),\;g^{i,j}(t)>0,\;\forall\;t\in(-\infty,T],\,i,j\in S.
\end{equation}
\end{theorem}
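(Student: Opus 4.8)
The plan is to build the solution by integrating the system \eqref{odes1} backward in time from the terminal data $g^{i,j}(T)=1$, and to show that the maximal backward solution never leaves the region where all $g^{i,j}$ are strictly positive, so that in fact it is global on $(-\infty,T]$. First I would establish local existence and uniqueness: on the open set $\mathcal D=\{(t,g)\in(-\infty,T]\times\mathbb R^4:\ g^{i,j}>0\}$ the right-hand side of \eqref{odes1} is smooth --- the transition probabilities $p(t,i,j)$ are explicit smooth functions of $t$, and $g$ enters only through powers and ratios of strictly positive quantities, the denominator of each $A_i(t)$ being bounded below by $\min(\alpha_1,\alpha_2)$ times a strictly positive number --- so the standard existence-uniqueness theorem gives a unique maximal solution on an interval $(\tau,T]$, $\tau\in[-\infty,T)$, along which $g^{i,j}(t)>0$. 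The decisive preliminary estimate is that $A_i(t)$ is bounded \emph{independently of the solution}: putting $w_1=p(t,i,1)(g^{i,1})^{\alpha_1/(1-\alpha_1)}\ge 0$ and $w_2=p(t,i,2)(g^{i,2})^{\alpha_2/(1-\alpha_2)}\ge 0$, one has $w_1+w_2>0$ because $p(t,i,1)+p(t,i,2)=1$, and $1/A_i(t)=(\alpha_1w_1+\alpha_2w_2)/(w_1+w_2)$ is a convex combination of $\alpha_1$ and $\alpha_2$; hence $A_i(t)\in[1/\max(\alpha_1,\alpha_2),\,1/\min(\alpha_1,\alpha_2)]$ for every $t\in(\tau,T]$.

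Next I would pass to the variables $u^{i,j}(t)=(g^{i,j}(t))^{\alpha_j}$, a diffeomorphism of $\mathcal D$ onto $\{u^{i,j}>0\}$. Since $(g^{\bar i,j}/g^{i,j})^{\alpha_j}=u^{\bar i,j}/u^{i,j}$ (with $\bar 1=2$, $\bar 2=1$), a direct computation turns \eqref{odes1} into the quasi-linear system $\dot u^{i,j}=\beta^{i,j}(t)\,u^{i,j}-\lambda_i\,u^{\bar i,j}$, $u^{i,j}(T)=1$, with $\beta^{i,j}(t)=(1-\alpha_j)\bigl(\tfrac12\alpha_j\kappa_i A_i(t)^2-\kappa_i A_i(t)-r_i\bigr)+\lambda_i$ and $\kappa_i=((\mu_i-r_i)/\sigma_i)^2$. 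By the bound on $A_i$, the $\beta^{i,j}(t)$ are bounded by an absolute constant $C_0$, so in matrix form $\dot u=M(t,u)\,u$ with $\|M(t,u)\|\le C_1$ uniformly on $\mathcal D$. After the time reversal $\tilde t=T-t$ the matrix $-M$ has nonnegative off-diagonal entries $\lambda_i$, i.e.\ it is Metzler (cooperative). From quasi-positivity the positive orthant is invariant --- rigorously via the standard perturbation $\dot{\tilde u}_\varepsilon=-M\tilde u_\varepsilon+\varepsilon\mathbf 1$ and $\varepsilon\downarrow 0$ --- so $u^{i,j}>0$; then componentwise $\tfrac{d}{d\tilde t}\tilde u^{i,j}\ge-\beta^{i,j}\tilde u^{i,j}\ge-C_0\tilde u^{i,j}$ and the energy estimate $\tfrac{d}{d\tilde t}|\tilde u|^2\le 2C_1|\tilde u|^2$ give, after Gronwall, $e^{-C_0(T-t)}\le u^{i,j}(t)\le 2e^{C_1(T-t)}$ on $(\tau,T]$, hence two-sided bounds $g^{i,j}(t)\in[e^{-C_0(T-t)/\alpha_j},\,(2e^{C_1(T-t)})^{1/\alpha_j}]$.

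These bounds finish the argument. If $\tau$ were finite they would confine the solution to a fixed compact subset of $\mathcal D$ on all of $(\tau,T]$; the right-hand side of \eqref{odes1} is then bounded there, so $g$ is Lipschitz near $\tau$, extends continuously to $t=\tau$ with value in $\mathcal D$, and the solution can be continued past $\tau$, contradicting maximality. Hence $\tau=-\infty$. Then $g^{i,j}\in C^1((-\infty,T])$ because the right-hand side is continuous along the solution, and $g^{i,j}(t)>0$ by the lower bound, which is exactly \eqref{gconditon}; uniqueness on $(-\infty,T]$ follows from the local uniqueness. The degenerate case $\lambda_1=\lambda_2=0$, where $p(t,1,1)=p(t,2,2)=1$ and $M$ is diagonal, is covered verbatim.

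I expect the main obstacle to be precisely this control of the solution in backward time: a priori the coupling term $\tfrac{\lambda_i}{1-\alpha_j}\bigl(1-(g^{\bar i,j}/g^{i,j})^{\alpha_j}\bigr)$ could grow superlinearly and produce blow-up or force some $g^{i,j}$ down to $0$. The two ingredients that defuse this are the solution-free bound on $A_i$ (which makes the remaining drift coefficients bounded) and the substitution $u^{i,j}=(g^{i,j})^{\alpha_j}$, which simultaneously linearizes that coupling term and exposes the Metzler structure needed to propagate strict positivity and to close the Gronwall estimate. A more computational alternative would work with $h^{i,j}=\log g^{i,j}$ and the differences $\log(g^{1,j}/g^{2,j})$, which obey scalar ODEs with a monotone, hence confining (in backward time), restoring term; this also yields the bounds but is heavier.
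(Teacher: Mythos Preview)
Your proposal is correct and reaches the conclusion, but by a genuinely different route from the paper. Both arguments hinge on the same pivotal observation---that $A_i(t)$ lies in $[1/\max(\alpha_1,\alpha_2),1/\min(\alpha_1,\alpha_2)]$ independently of the solution---which tames the drift. From there the paths diverge. The paper controls the \emph{ratios} $x(t)=(g^{1,j}(t)/g^{2,j}(t))^{\alpha_j}$: each satisfies a scalar Riccati-type equation $x'=\lambda_2x^2-\lambda_1+(1-\alpha_j)M(t)x$ with $M$ bounded, and a comparison lemma against constant-coefficient Riccati ODEs (their Lemma~\ref{lemma2}) traps $x$ in a fixed positive interval; with the ratios bounded, the log-form \eqref{odes2} has bounded right-hand side, giving Lipschitz continuity and extension past any finite endpoint. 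Your substitution $u^{i,j}=(g^{i,j})^{\alpha_j}$ instead linearizes the coupling term directly, producing $\dot u=M(t,u)u$ with uniformly bounded $M$; the cooperative (Metzler) structure after time reversal then yields positivity and two-sided Gronwall bounds in one stroke, with no scalar comparison needed. Your route is more structural and extends verbatim to $N>2$ regimes (the off-diagonal entries $q_{ik}\ge0$ persist), whereas the paper's ratio argument is tied to $N=2$. Incidentally, the ``more computational alternative'' you sketch at the end---working with $\log g^{i,j}$ and the differences $\log(g^{1,j}/g^{2,j})$---is essentially what the paper does. One minor redundancy: since you work on the maximal interval in $\mathcal D$, positivity of $u^{i,j}$ holds by definition there, so the Metzler invariance step is not strictly needed before the Gronwall lower bound; but it does no harm.
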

To prove Theorem \ref{odesexistence}, we first introduce the following definition and lemmas: 
\begin{definition}
We say $(I,\vec{G})$ is a solution of ODEs \eqref{odes1} if and only if it satisfies the following conditions:
	\\
	(a) $I$ is an interval and $T\in I$;
	\\
	(b) $\vec{G}(t):t\to \left (g^{1,1}(t),g^{2,1}(t),g^{1,2}(t),g^{2,2}(t)\right )^T,\; I\to (\mathbb{R}^+)^4$ is differentiable and $\vec{G}(T)=(1,1,1,1)^T$;
	\\
	(c) $\forall \;t\in I\; ,\vec{G}(t)$ satisfies ODEs \eqref{odes1}.
\end{definition}
The set of the solution of  ODEs \eqref{odes1} is denoted by $\mathscr{G}$. Based on theory on the local existence of solutions to the initial value problem for ODEs (see,  \cite{o1997existence}, Chapter 3, Theorem 3.3), we have $\mathscr{G}\ne \emptyset$.
\begin{lemma}\label{lemma1}
If $(I_1,\vec{G_1})\in\mathscr{G},\;(I_2,\vec{G_2})\in\mathscr{G}$, then
	$$\vec{G_1}(t)=\vec{G_2}(t)\; , \forall \; t\in I_1\cap I_2.$$
\end{lemma}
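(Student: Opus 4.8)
The plan is to prove this by a standard connectedness argument on the overlap interval $I_1 \cap I_2$, leveraging local Lipschitz continuity of the vector field defining ODEs \eqref{odes1}. First I would observe that $I_1 \cap I_2$ is an interval containing $T$, hence connected and nonempty. Let $E = \{t \in I_1 \cap I_2 : \vec{G_1}(s) = \vec{G_2}(s) \text{ for all } s \text{ between } t \text{ and } T\}$; equivalently, since we want to run time both forward and backward from $T$, it is cleanest to set $E = \{t \in I_1 \cap I_2 : \vec{G_1}(t) = \vec{G_2}(t)\}$ and show $E$ is nonempty, closed, and open in $I_1 \cap I_2$, so that $E = I_1 \cap I_2$ by connectedness. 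Nonemptiness is immediate since $T \in E$ because $\vec{G_1}(T) = \vec{G_2}(T) = (1,1,1,1)^T$. Closedness follows from continuity of $\vec{G_1} - \vec{G_2}$.

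The substantive step is openness of $E$. Here I would rewrite ODEs \eqref{odes1} in the normal form $\vec{G}'(t) = F(t, \vec{G}(t))$ on the open region where all four components are strictly positive — note that $\alpha_j/(1-\alpha_j) \neq 0$ so one may divide through, and the right-hand sides involve only the known smooth deterministic functions $p(t,i,j)$ (which are $C^1$ in $t$ by \eqref{probability}) together with rational-power and ratio expressions in $g^{i,j}$ that are $C^1$ — in particular, the map $(g^{i,1}, g^{i,2}) \mapsto A_i(t)$ and the ratio terms like $(g^{2,1}/g^{1,1})^{\alpha_1}$ are smooth, hence locally Lipschitz in $\vec{G}$, on the open set $(\mathbb{R}^+)^4$. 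One must check that the denominator $\alpha_1 p(t,i,1)(g^{i,1})^{\alpha_1/(1-\alpha_1)} + \alpha_2 p(t,i,2)(g^{i,2})^{\alpha_2/(1-\alpha_2)}$ appearing in $A_i(t)$ does not vanish; since $\alpha_1, \alpha_2 > 0$, the powers of the strictly positive $g^{i,j}$ are strictly positive and the $p(t,i,j)$ are nonnegative with $p(t,i,1) + p(t,i,2) = 1$, so the denominator is strictly positive — thus $F$ is well-defined and locally Lipschitz on $[0,T] \times (\mathbb{R}^+)^4$ (and in fact on $(-\infty, T] \times (\mathbb{R}^+)^4$). Then at any point $t_0 \in E$, both $\vec{G_1}$ and $\vec{G_2}$ solve the same ODE with the same initial condition $\vec{G_1}(t_0) = \vec{G_2}(t_0)$, so the local uniqueness theorem for ODEs with locally Lipschitz right-hand side (as in \cite{o1997existence}, Chapter 3) forces $\vec{G_1} = \vec{G_2}$ on a neighborhood of $t_0$ in $I_1 \cap I_2$, giving openness.

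I expect the main obstacle — really the only place requiring care — to be verifying that the vector field $F$ is genuinely locally Lipschitz near an arbitrary point of $(\mathbb{R}^+)^4$, i.e., that no hidden singularity arises from the fractional exponents $\alpha_j/(1-\alpha_j)$ (which may be negative when $\alpha_j < 1$) or from the ratio terms $(g^{k,j}/g^{i,j})^{\alpha_j}$. All of these are compositions of $C^1$ functions on the open positive orthant, so they are $C^1$ there, hence locally Lipschitz; the only thing to guard against is a zero denominator, handled above. Everything else — connectedness of the overlap interval, the open–closed dichotomy, and the appeal to local uniqueness — is routine, so the lemma follows.
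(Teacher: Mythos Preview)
Your proof is correct and follows essentially the same approach as the paper's: both reduce the claim to local uniqueness for ODEs with a locally Lipschitz right-hand side, the paper packaging this as a supremum-plus-contradiction argument while you use the equivalent clopen/connectedness formulation. Your explicit verification that the denominator in $A_i(t)$ is strictly positive on $(\mathbb{R}^+)^4$ is a welcome detail that the paper leaves implicit.
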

\begin{proof}
%[\bf Proof of Lemma \ref{lemma1}]
Assume $\exists \;t^*\in I_1\cap I_2$ such that $ \vec{G_1}(t^*)\neq\vec{G_2}(t^*)$. Without loss of generality, let $t^*<T$. Denote
	$$t_*= \sup\left \{t^*<T: \vec{G_1}(t^*)\neq\vec{G_2}(t^*)\right \}\in I_1\cap I_2.$$
Using  the continuity of $\vec{G_1}$ and $\vec{G_2}$, we have  $\vec{G_1}(t_*)=\vec{G_2}(t_*)$. However, 
	$$\vec{G_1}|_{(t_*-\delta,t_*)}\neq\vec{G_2}|_{(t_*-\delta,t_*)},\;\forall\;0<\delta<<1.$$
This contradicts the uniqueness of the local solution to the initial value problem with the initial condition $(t_*,\vec{G_1}(t_*))$. Thus Lemma \ref{lemma1} holds.
\end{proof}
\begin{lemma}\label{lemma2}
Considering the following initial value problem
	\begin{equation}\label{ode}
	\left\{
		\begin{aligned}
		&x^{'}=d(x-a)(x+b),
		\\
		&x(T)=1,
		\end{aligned}
		\right.	    
	\end{equation}
	where $a,b,d,T>0$, then ODE \eqref{ode} exists a global solution on $(-\infty,T]$, and $\forall\;t\in(-\infty,T)$, 
	$$x(t)\in \left\{\begin{aligned}
		(1,a), & \quad a\in(1,+\infty),\\
		1, & \quad a = 1,\\
		(a,1), & \quad a\in(0,1).
	\end{aligned}\right.$$
\end{lemma}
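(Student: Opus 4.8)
The plan is to treat \eqref{ode} as a scalar autonomous equation and argue by a phase-line (invariant-interval) analysis, combined with the elementary fact that a solution confined to a compact set cannot blow up in finite time. First I would dispose of the degenerate case $a=1$: then $x\equiv 1$ solves \eqref{ode} because the right-hand side $F(x)=d(x-a)(x+b)$ vanishes identically along it, and since $F$ is a polynomial, hence locally Lipschitz, Picard--Lindel\"of uniqueness forces $x\equiv 1$ to be the only solution, which is trivially global on $(-\infty,T]$.

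For $a\neq 1$ I would note that $F$ has exactly the two zeros $x=a$ and $x=-b$, and that $-b<0<\min\{1,a\}$, so the closed interval $\bar I$ with endpoints $1$ and $a$ lies strictly to the right of $-b$ and avoids it. On the open interval $I$ between $1$ and $a$ the factor $x+b$ is positive while $x-a$ keeps a constant sign (negative if $a>1$, positive if $a<1$), so $F$ has constant sign on $I$: $F<0$ when $a>1$ and $F>0$ when $a<1$. In either case, starting from the terminal value $x(T)=1$ and letting $t$ decrease, the solution moves into $I$ and away from the endpoint $1$ (when $a>1$, $x'<0$ so $x$ increases backward in time; when $a<1$, $x'>0$ so $x$ decreases backward in time). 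It cannot reach the other endpoint $a$ in finite backward time, since $x=a$ is an equilibrium and a solution attaining the value $a$ would have to coincide with the constant solution $x\equiv a$ by uniqueness. Hence $x(t)\in I$ for every $t<T$ in the maximal domain, which is exactly the claimed inclusion, with $I=(1,a)$ if $a>1$ and $I=(a,1)$ if $a<1$. (Equivalently, one can separate variables, $\frac{dx}{(x-a)(x+b)}=d\,dt$, integrate by partial fractions to obtain $\bigl|\frac{x-a}{x+b}\bigr|=\bigl|\frac{1-a}{1+b}\bigr|\,e^{d(a+b)(t-T)}$, and then use that $x\mapsto\frac{x-a}{x+b}$ is strictly increasing on $(-b,\infty)$ to convert the inequality $e^{d(a+b)(t-T)}<1$ for $t<T$ directly into the stated strict inclusion; this route also exhibits the global solution explicitly.)

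For global existence, let $(\tau,T]$ be the maximal interval of existence to the left of $T$. If $\tau>-\infty$, the standard continuation theorem would force $x(t)$ to leave every compact subset of $\mathbb{R}$ as $t\downarrow\tau$, contradicting the fact just established that $x(t)$ remains in the compact set $\bar I$. Hence $\tau=-\infty$, so the solution exists on all of $(-\infty,T]$, and combining with the sign analysis gives the bounds. I expect the argument to be essentially routine; the only point that needs care is the bookkeeping of the backward time direction (the data is prescribed at the right endpoint $T$), i.e.\ checking that the interval between $1$ and $a$ is genuinely trapping rather than repelling as $t$ decreases.
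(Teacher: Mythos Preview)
Your argument is correct. Both you and the paper establish the same trapping region $[\min\{1,a\},\max\{1,a\}]$ by elementary scalar ODE reasoning, but the techniques differ. You run a phase-line analysis: the sign of $F(x)=d(x-a)(x+b)$ is constant on the open interval between $1$ and $a$, so the solution is monotone in backward time, enters that interval immediately (since $F(1)\neq 0$), and cannot reach the equilibrium $a$ by Picard--Lindel\"of uniqueness; boundedness then forces $\tau=-\infty$. The paper instead applies the comparison theorem twice: first against the constant equilibrium $x_1\equiv a$ to get one bound, and then---for the other bound---against $x_2\equiv 1$, which is an equilibrium not of the original equation but of the shifted equation $x'=d(x-a)(x+b)-d(1-a)(1+b)$. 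Your approach is arguably more direct and self-contained (and your alternative explicit integration makes the bounds completely transparent), while the paper's shifted-comparison trick is a neat device that avoids tracking the backward time direction explicitly. Either route is entirely adequate for this auxiliary lemma.
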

\begin{proof}
%[\bf Proof of Lemma \ref{lemma2}]
The global existence of the initial value problem \eqref{ode} on $(-\infty,T]$ can be obtained directly. 
 Without loss of generality, consider the case where $a>1$. Define $x_1(t)\equiv a,\forall\,t\in(-\infty,T]$. It is straightforward to verify that $x_1$ satisfies  ODE \eqref{ode}. By comparison theorem, $x(t)<a,\forall\,t\in(-\infty,T]$. Next, define $x_2(t)\equiv 1,\forall\,t\in(-\infty,T]$, then $x_2$ satisfies the following initial value problem
        $$
	    \left\{
		\begin{aligned}
		&x^{'}=d(x-a)(x+b)-d(1-a)(1+b),
		\\
		&x(T)=1.
		\end{aligned}
		\right.	    
        $$  
Again by comparison theorem, $x(t)>1,\forall\,t\in(-\infty,T]$. The other cases can be proved by the same way. Therefore Lemma \ref{lemma2} holds.
\end{proof}
Now we present the proof of Theorem \ref{odesexistence} as follows:

\begin{proof}[\bf Proof of Theorem \ref{odesexistence}]
Consider the set \begin{equation}
 C=\left\{c\in(-\infty,T): \ \exists\;\vec{G}\ \mbox{such that}
 \ ((c,T],\vec{G})\in\mathscr{G}\right\}.   
\end{equation}
By the local existence of ODEs \eqref{odes1}, $C\ne\emptyset$. To prove $\inf\left\{C\right\}=-\infty$, we employ a proof by contradiction. Assume $$\inf\{C\}=c^*\in(-\infty,T).$$ Then $$\;\exists \;\{c_n\}_{n\ge1}\downarrow c^*,\;((c_n,T],\vec{G_n})\in \mathscr{G},\;\forall\;n\in \mathbb{N}.$$
Define $$\vec{G^*}(t)=\vec{G_n}(t),\;t>c_n,\;\forall \;t\in(c^*,T].$$ By using Lemma \ref{lemma1}, $\vec{G^*}$ is well defined,  then $((c^*,T],\vec{G^*})\in \mathscr{G}$. Denote $$\vec{G^*}(t)=(g^{1,1}(t),g^{2,1}(t),g^{1,2}(t),g^{2,2}(t))^T,\;t\in(c^*,T].$$
Denote $x(t)=(\frac{g^{1,1}(t)}{g^{2,1}(t)})^{\alpha_1},\ t\in(c^*,T]$, we have
\begin{equation*}
    x^{'}(t)=\lambda_2x^2(t)-\lambda_1+(1-\alpha)M(t)x(t),
\end{equation*}
where 
\begin{equation*}
    \begin{aligned}
    M(t)=&\frac{1}{2}\alpha (\frac{\mu_1-r_1}{\sigma_1})^2 A_1^2(t)-(\frac{\mu_1-r_1}{\sigma_1})^2 A_1(t)-r_1+\frac{\lambda_1}{1-\alpha}\\&-\Big\{\frac{1}{2}\alpha (\frac{\mu_2-r_2}{\sigma_2})^2 A_2^2(t)-(\frac{\mu_2-r_2}{\sigma_2})^2 A_2(t)-r_2+\frac{\lambda_2}{1-\alpha}\Big\}.
    \end{aligned}
\end{equation*}
Then, using $A_1(t),A_2(t)\in[\frac{1}{\alpha_2},\frac{1}{\alpha_1}]$, we see  $M(t)\in[M_1,M_2]$ for some constants $M_1,M_2\in \mathbb{R}$.

Considering the following two initial value problems:
\begin{equation*}
    \left\{
	\begin{aligned}
		&y^{'}=\lambda_2y^2-\lambda_1+(1-\alpha)M_1y,
		\\
		&y(T)=1,
	\end{aligned}
	\right.
\end{equation*}
\begin{equation*}
    \left\{
	\begin{aligned}
		&z^{'}=\lambda_2z^2-\lambda_1+(1-\alpha)M_2z,
		\\
		&z(T)=1.
	\end{aligned}
	\right.
\end{equation*}
For the case where $\alpha\in(0,1)$, based on the comparison theorem, we have  $$\forall\,t\in(c^*,T],z(t)\le x(t)\le y(t), 
  \forall   t\in (c^*,T].$$
Because $\lambda_1,\lambda_2>0$, the equations $\lambda_2y^2-\lambda_1+(1-\alpha)M_1y=0$ and $\lambda_2z^2-\lambda_1+(1-\alpha)M_2z=0$ each have one positive and one negative roots. Therefore, by Lemma \ref{lemma2}, $$\exists\; N_1,N_2>0 \
\mbox{such that} \ y(t)\le N_2,\;z(t)\ge N_1,\;\forall \;t \in (c^*,T] .$$
Hence $x(t)\in[N_1,N_2],\;\forall\;t\in(c^*,T]$.
The same result can be proved in a similar way as in  the case where $\alpha \in(1,\infty)$.
In brief, \begin{equation}\label{neq1}
		\exists N_1,N_2>0\ \mbox{such that}\ \ (\frac{g^{1,1}(t)}{g^{2,1}(t)})^{\alpha_1} \in[N_1,N_2],\;\forall\;t\in(c^*,T].
	\end{equation} 
 And similarly, 
 \begin{equation}\label{neq2}
		\exists N_3,N_4>0  \ \mbox{such that} \  \ (\frac{g^{1,2}(t)}{g^{2,2}(t)})^{\alpha_2} \in[N_3,N_4],\;\forall\;t\in(c^*,T].
	\end{equation}
 Because $g^{i,j}(t)>0\quad\forall\,t\in(c^*,T],i,j\in S$,  ODEs \eqref{odes1} is equivalent to the following: 
 $\forall i,j\in S $, 
 \begin{equation}\label{odes2}
			\left\{
		\begin{aligned}
		&(\log(g^{i,j}))_t(t)=\frac{1-\alpha_j}{\alpha_j}\left\{\frac{1}{2}\alpha_j (\frac{\mu_i\!\!-\!\!r_i}{\sigma_i})^2 A_i^2(t)-(\frac{\mu_i\!\!-\!\!r_i}{\sigma_i})^2 A_i(t)
			-r_i+\frac{\lambda_i}{1\!\!-\!\!\alpha_j}\left[1-(\frac{g^{\hat i,j}(t)}{g^{i,j}(t)})^{\alpha_j}\right]\right\},
   			\\
		&g^{1,1}(T)=g^{2,1}(T)=g^{1,2}(T)=g^{2,2}(T)=1,
		\end{aligned}
		\right.
	\end{equation}
 where $\hat i=3-i$.
 
 Denote ODEs \eqref{odes2} as \begin{equation}\label{odes3}
	    \left\{
	    \begin{aligned}
	&(\log(g^{i,j}))_t(t)=f^{i,j}(t,g^{1,1}(t),g^{2,1}(t),g^{1,2}(t),g^{2,2}(t)), \forall i,j\in S,
		\\ &
		g^{1,1}(T)=g^{2,1}(T) =g^{1,2}(T)=g^{2,2}(T)=1.
	\end{aligned}
	\right.
	\end{equation}
 By using \eqref{neq1}, \eqref{neq2} and the boundedness of $A_1(t)$ and  $A_2(t)$,
 $$\forall\;i,j\in\{1,2\},\;\exists \;L>0\ \mbox{such that } \  |(\log(g^{i,j}))_t(t)|\le L,\;\forall\;t\in(c^*,T].$$
Then  by Lagrange mean value theorem, $(\log(g^{i,j}))(t)$ is Lipschitz-continuous on  $(c^*,T]$, and hence uniformly continuous. Therefore, $\lim_{t\to {c^*}^+}(\log(g^{i,j}))(t)$ exists and is finite. Consequently, $(\log(g^{i,j}))(t)$ can be continuous extended at $t=c^*$. Define $(\log(g^{i,j}))(c^*)=\lim\limits_{t\to {c^*}^+}(\log(g^{i,j}))(t)$, and we have $g^{i,j}(c^*)>0$. By L'Hôpital's rule, \begin{equation}
	    \begin{aligned}
	            (\log(g^{i,j}))_t(c^*)&=\lim_{t\to {c^*}^+}\frac{(\log(g^{i,j}))(t)-(\log(g^{i,j}))(c^*)}{t-c^*}=\lim_{t\to {c^*}^+}(\log(g^{i,j}))_t(t)
	\\
	&=\lim_{t\to {c^*}^+}f^{i,j}(t,g^{1,1}(t),g^{2,1}(t),g^{1,2}(t),g^{2,2}(t))\\
	&=f^{i,j}(c^*,g^{1,1}(c^*),g^{2,1}(c^*),g^{1,2}(c^*),g^{2,2}(c^*)).
	    \end{aligned}
	\end{equation}
 We still denote the function after extension as $\vec{G^*}(t)=(g^{1,1}(t),g^{2,1}(t),g^{1,2}(t),g^{2,2}(t))^T$. It is differentiable and satisfies ODEs \eqref{odes3} on $[c^*,T]$. Because of the equivalence between ODEs \eqref{odes1} and ODEs \eqref{odes3}, $\vec{G^*}(t)$ satisfies ODEs \eqref{odes1}. Therefore  $([c^*,T],\vec{G^*})\in \mathscr{G}$.

 Consider the original initial value problem but with the initial value of $(\!c^*,\vec{G^*}(c^*)\!)$. By the local existence, $\exists\,\delta >0$ such that $ ((c^*-\delta,c^*+\delta),\vec{G_*})\in \mathscr{G}$. Define
$$\vec{G^{**}}(t)=
	\left\{\begin{aligned}
		\vec{G^*}(t) & \quad t\in(c^*,T],\\
		\vec{G_*}(t) & \quad t\in(c^*-\delta,c^*+\delta).
	\end{aligned}\right.$$
	By using  Lemma \ref{lemma1}, $\vec{G^{**}}$ is well defined, and $((c^*-\delta,T],\vec{G^{**}})\in \mathscr{G}$. This contradicts $c^*=\inf C$. Therefore, $\inf C=-\infty$. Then $$\exists \{c_n\}_{n\ge1}\downarrow -\infty \ \ \mbox{ such that}\ \ 
   ((c_n,T],\vec{G_n})\in \mathscr{G},\forall\,n\in \mathbb{N}.$$
 Define $$\vec{G^*}(t)=\vec{G_n}(t),\;t>c_n,\;\forall \;t\in(-\infty,T].$$
 By using Lemma \ref{lemma1}, $\vec{G^{*}}$ is well defined, and hence $((-\infty,T],\vec{G^*})\in \mathscr{G}$. $\vec{G^{*}}$ is the global solution of ODEs \eqref{odes1} on $(-\infty,T]$. The uniqueness follows from Lemma \ref{lemma1}. Then the property \eqref{gconditon} is obvious by ODEs \eqref{odes1}.
\end{proof}
\subsection{\bf Equilibrium Strategy}
Using the separation of variables and  the global existence results for ODEs \eqref{odes1}, we derive the feedback form of the equilibrium strategy as follows:
 \begin{equation}\label{strategy21}
    \hat\pi(t,x,i)=\frac{\mu_i-r_i}{\sigma_i^2}\, x \, \frac{p(t,i,1)(g^{i,1}(t))^{\frac{\alpha_1}{1-\alpha_1}}+p(t,i,2)(g^{i,2}(t))^{\frac{\alpha_2}{1-\alpha_2}}}{\alpha_1 p(t,i,1)(g^{i,1}(t))^{\frac{\alpha_1}{1-\alpha_1}}+\alpha_2 p(t,i,2)(g^{i,2}(t))^{\frac{\alpha_2}{1-\alpha_2}}}.
\end{equation}
And the specific form \eqref{solution1} of $\{f^{i,j}(t,x)\}_{i,j\in S}$ is
$$
    f^{i,j}(t,x)=\frac{1}{1-\alpha_j}x^{1-\alpha_j}(g^{i,j}(t))^{\alpha_j},\;\forall\,i,j\in S.
$$
The following theorem asserts that \eqref{strategy21} is  the equilibrium strategy of the original problem.
\begin{theorem}\label{check}
%If the solution of ODEs \eqref{odes1} $\{g^{i,j}(t)\}_{i,j\in S}$  satisfies the condition \eqref{solution2}, then 
The family of functions $\{f^{i,j}(t,x)\}_{i,j\in S}$ defined by  the form \eqref{solution1} of separation of variables satisfies  the requirements of the Theorem \ref{verification}. Moreover, $\hat \pi$ given by \eqref{strategy21} is the equilibrium strategy of the original problem.
\end{theorem}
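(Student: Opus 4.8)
The plan is to verify, one by one, the three hypotheses $(a)$, $(b)$, $(c)$ of Theorem \ref{verification} for the family $\{f^{i,j}\}$ given by the ansatz \eqref{solution1} and the feedback strategy $\hat\pi$ of \eqref{strategy21}, and then invoke Theorem \ref{verification} to conclude that $\hat\pi$ is an equilibrium strategy. First I would record the regularity: since Theorem \ref{odesexistence} gives $g^{i,j}\in C^1((-\infty,T])$ with $g^{i,j}>0$, the functions $f^{i,j}(t,x)=\frac{1}{1-\alpha_j}x^{1-\alpha_j}(g^{i,j}(t))^{\alpha_j}$ belong to $C^{1,2}([0,T)\times\mathbb{R}^+)\cap C([0,T]\times\mathbb{R}^+)$, are strictly concave in $x$ (so Assumption \ref{assume4} holds and the first-order reasoning leading to \eqref{strategy3}--\eqref{strategy2} is legitimate), and satisfy the terminal condition $f^{i,j}(T,x)=u^j(x)$ because $g^{i,j}(T)=1$. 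Next I would check the PDE part of condition $(a)$: plugging the ansatz into $\mathcal{A}^{\hat\pi}f^{i,j}=0$ and cancelling the common factor $\frac{1}{1-\alpha_j}x^{1-\alpha_j}$ reduces, after substituting $\hat\pi(t,x,i)=\frac{\mu_i-r_i}{\sigma_i^2}xA_i(t)$, exactly to the ODE system \eqref{odes1}, which holds by construction; and the $\sup$ equation in \eqref{(a)} holds because $\hat\pi$ was defined precisely as the argmax of the (strictly concave, by Assumption \ref{assume4}) quadratic in $\pi$, so the supremum equals the value at $\hat\pi$, which is $\sum_k((u^k)^{-1})'(f^{i,k})\mathcal{A}^{\hat\pi}f^{i,k}p(t,i,k)=0$.

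For condition $(b)$ I would compute, under $\hat\pi$, the integrand $f_x^{\epsilon_u,j}(u,X_u^{\hat\pi})\hat\pi(u,X_u^{\hat\pi},\epsilon_u)\sigma_{\epsilon_u}$. Using $f_x^{i,j}=x^{-\alpha_j}(g^{i,j})^{\alpha_j}$ and $\hat\pi=\tfrac{\mu_i-r_i}{\sigma_i^2}xA_i(t)$, this integrand is a bounded deterministic multiple of $(X_u^{\hat\pi})^{1-\alpha_j}$; since $\hat\pi$ is linear in $x$, $X^{\hat\pi}$ is a (regime-modulated) geometric-type process and all moments $\mathbb{E}[(X_u^{\hat\pi})^{2(1-\alpha_j)}]$ are finite and bounded on $[0,T]$, so the Itô integral has square-integrable integrand and is a true martingale — here one uses that $W$ remains a Brownian motion under $\mathbb{P}_{t,x,i,j}$ (noted in the excerpt) and that the $g^{i,j}$ and $A_i$ are continuous hence bounded on the compact interval. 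For condition $(c)$, i.e. Assumptions \ref{assume1}--\ref{assume3}: Assumption \ref{assume1} follows because for \emph{any} admissible $\pi$ (with $|\pi(t,x,i)|\le C|x|$ and $X^\pi>0$) the same moment estimates make $\int_t^s f_x^{\epsilon_u,j}\pi\sigma\,dW_u$ a genuine martingale on a small interval $[t,\hat t]$, in particular a supermartingale; Assumption \ref{assume2} (uniform integrability of $\frac1h\int_t^{t+h}\mathcal{A}^\pi f^{\epsilon_u,j}\,du$) follows from an $L^2$-bound on $\mathcal{A}^\pi f^{\epsilon_u,j}$ on $[t,\hat t]$ — the drift, diffusion and jump terms of $\mathcal{A}^\pi f^{i,j}$ are all dominated by $C(1+(X_u^\pi)^{1-\alpha_j}+(X_u^\pi)^{2-\alpha_j})$ times bounded coefficients, which is uniformly integrable since the family is $L^2$-bounded; and Assumption \ref{assume3} is continuity of $(t,X^\pi)\mapsto f^{\epsilon_t,j}(t,X^\pi)$ together with dominated convergence, again using the moment bounds and right-continuity of $\epsilon$.

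The main obstacle I anticipate is the uniform moment control of $X^\pi$ for arbitrary admissible $\pi$ needed in condition $(c)$: one only knows $|\pi(t,x,i)|\le C|x|$ and $X^\pi>0$, so $X^\pi$ need not be an exponential martingale, but the linear-growth bound on the coefficients of SDE \eqref{wealth} gives $dX_t^\pi = X_t^\pi(\text{bounded drift})\,dt + X_t^\pi(\text{bounded diffusion})\,dW_t$, whence Grönwall/BDG arguments yield $\sup_{s\in[0,T]}\mathbb{E}[\,(X_s^\pi)^p\,]<\infty$ for every $p\in\mathbb{R}$, which is exactly what is required; I would state this as a short lemma and then the three Assumptions and condition $(b)$ all fall out by routine estimates. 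Once $(a)$, $(b)$, $(c)$ are in place, Theorem \ref{verification} immediately gives that $\hat\pi$ in \eqref{strategy21} is an equilibrium strategy and that $f^{i,j}(t,x)=\mathbb{E}_{t,x,i,j}[u^j(X_T^{\hat\pi})]$, completing the proof.
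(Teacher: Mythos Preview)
Your proposal is correct and follows essentially the same architecture as the paper's proof: verify condition $(a)$ by construction of the ODE system, establish moment bounds $\sup_{s\in[t,T]}\mathbb{E}_{t,x,i,j}[|X_s^\pi|^p]<\infty$ for admissible $\pi$ as an auxiliary lemma, and then deduce conditions $(b)$ and $(c)$ from these bounds by showing the relevant stochastic integrals are true martingales and the generator terms are $L^2$-bounded (hence uniformly integrable). The only omission is that you do not explicitly record the admissibility check $\hat\pi\in\Pi$ (the paper's Step~1): one must note that $\hat\pi(t,x,i)=\frac{\mu_i-r_i}{\sigma_i^2}A_i(t)\,x$ is linear in $x$ with $A_i$ bounded, so SDE~\eqref{wealth} becomes a linear SDE with a strictly positive explicit solution and the linear-growth bound $|\hat\pi|\le C|x|$ of Definition~\ref{admissible} is immediate.
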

\begin{proof}[\bf Proof of Theorem \ref{check}]
The proof is divided into the following five steps:

{\bf Step 1: $\hat\pi$ is admissible}. Because $\hat\pi$ is linear respect to $x$, the wealth process $X^{\hat\pi}$ satisfies the following linear SDE:
\begin{equation}\label{LSDE}
    d X_t^{\hat\pi}=(\frac{\mu_{\epsilon_t}-r_{\epsilon_t}}{\sigma_{\epsilon_t}})^2 X_t^{\hat\pi}A_{\epsilon_t}(t) d t+r_{\epsilon_t} X_t^{\hat\pi} d t+\frac{\mu_{\epsilon_t}-r_{\epsilon_t}}{\sigma_{\epsilon_t}} X_t^{\hat\pi}A_{\epsilon_t}(t) d W_t.
\end{equation}
Fixing the initial point $X^{\hat\pi}_t=x>0$,  and solving SDE (\ref{LSDE}) (see \cite{karatzas2014brownian}, Chapter 5, Section 6), we have 
\begin{equation}
\begin{aligned}
X^{\hat\pi}_s=x\exp\Bigg\{\int_t^s\Big[(\frac{\mu_{\epsilon_u}-r_{\epsilon_u}}{\sigma_{\epsilon_u}})^2A_{\epsilon_u}(u)+r_{\epsilon_u}-\frac12(\frac{\mu_{\epsilon_u}-r_{\epsilon_u}}{\sigma_{\epsilon_u}})^2 (A_{\epsilon_u}(u))^2 \Big]du\\+\int_t^s\frac{\mu_{\epsilon_u}-r_{\epsilon_u}}{\sigma_{\epsilon_u}}A_{\epsilon_u}(u)d W_u\Bigg\} >0,\;\forall\;s\in[t,T].
\end{aligned}
\end{equation}
Moreover,
$$|{\hat\pi(t,x,i)}|\le\mathop{max}\limits_{i=1,2}\Big\{\frac{\mu_1-r_1}{\sigma_1^2}\Big\}\mathop{max}\limits_{i=1,2}\Big\{\frac{1}{\alpha_1}\Big\}|x|.$$
Therefore, $\hat\pi\in\Pi$ based on Definition \ref{admissible}.

{\bf Step 2: Verify Assumption \ref{assume4}.}
By using Form \eqref{solution1} of separation of variables of $\{f^{i,j}(t,x)\}_{i,j\in S}$, \begin{equation}
    f^{i,j}_{xx}(t,x)=-\alpha_j x^{-\alpha_j-1}(g^{i,j}(t))^{\alpha_j}<0, \;\forall\; (t,x,i,j)\in[0,T)\times \mathbb{R}^+ \times S^2.
\end{equation}
Then Assumption \ref{assume4} holds.

{\bf Step 3: Introduce three useful Lemmas as follows:}
\begin{lemma}\label{lemma3}
$\forall \pi\in\Pi,(t,x,i,j)\in [0,T]\times \mathbb{R}^+ \times S^2$, 
	\begin{equation}\label{estimate1}
	    \mathop{sup}\limits_{s\in[t,T]}\{\mathbb{E}_{t,x,i,j}[|X_s^\pi|^2]\}<\infty.
	\end{equation}
\end{lemma}

\begin{proof}[Proof of Lemma \ref{lemma3}]
Integrating \eqref{wealth} over interval $[s,t]$, we have
	\begin{equation*}
	    X_s^\pi\!=\!X_t^\pi\!+\!\int_t^s\!(\!\mu_{\epsilon_u}\!-\!r_{\epsilon_u}\!)\!\pi\!(\!u,X_u^\pi,\epsilon_u\!)\! d u\!+\!\int_t^s r_{\epsilon_u} X_u^\pi d u\!+\!\int_t^s \pi\!(\!u,X_u^\pi,\epsilon_u\!)\!\sigma_{\epsilon_u} d W_u\, ,
	\forall\,s\in[t,T].
	\end{equation*}
 Using the Cauchy-Schwarz inequality, we obtain
	\begin{equation}\label{calculus2}
		\begin{aligned}
			|X_s^\pi|^2\le& 4\Bigg\{ |X_t^\pi|^2+\Big|\int_t^s(\mu_{\epsilon_u}-r_{\epsilon_u})\pi(u,X_u^\pi,\epsilon_u) d u\Big|^2
			\\&+\Big|\int_t^s r_{\epsilon_u} X_u^\pi d u\Big|^2+\Big|\int_t^s \pi(u,X_u^\pi,\epsilon_u)\sigma_{\epsilon_u} d W_u\Big|^2\Bigg\}
			\\ 		
			\le&4\Bigg\{ |X_t^\pi|^2+(s-t)\int_t^s|(\mu_{\epsilon_u}-r_{\epsilon_u})\pi(u,X_u^\pi,\epsilon_u)|^2 d u
			\\&+(s-t)\int_t^s |r_{\epsilon_u} X_u^\pi|^2 d u+\Big|\int_t^s \pi(u,X_u^\pi,\epsilon_u)\sigma_{\epsilon_u} d W_u\Big|^2\Bigg\}
			\\
			\le&4\Bigg\{ |X_t^\pi|^2+T\int_t^s|(\mu_{\epsilon_u}-r_{\epsilon_u})\pi(u,X_u^\pi,\epsilon_u)|^2 d u
			\\&+T\int_t^s |r_{\epsilon_u} X_u^\pi|^2 d u+\Big|\int_t^s \pi(u,X_u^\pi,\epsilon_u)\sigma_{\epsilon_u} d W_u\Big|^2\Bigg\}.
		\end{aligned}
	\end{equation}	
Denote $C_1=\mathop{max}\limits_{i=1,2}\{(\mu_i-r_i)^2,r_i^2,\sigma_i^2,\lambda_i^2\}$. Using Condition $(ii)$ of Definition \ref{admissible}, we have
\begin{eqnarray}
&& \int_t^s\!|(\mu_{\epsilon_u}\!\!-\!\! r_{\epsilon_u})\pi(u,X_u^\pi,\epsilon_u)|^2 d u\!\!\le \!\!C_1\int_t^s\!|\pi(u,X_u^\pi,\epsilon_u)|^2d u\!\!\le \!\!C_1C^2\int_t^s|X_u^\pi|^2d u,\\ \label{estimate2}
&&\int_t^s |r_{\epsilon_u} X_u^\pi|^2 d u\le C_1\int_t^s|X_u^\pi|^2d u.\label{estimate3}
\end{eqnarray}
Based on  the Burkholder-Davis-Gundy inequality (see \cite{karatzas2014brownian}, Chapter 3, Section 3), there exists a constant $C_2>0$ such that 
\begin{equation}\label{estimate4}
    \begin{aligned}
		\mathbb{E}_{t,x,i,j}\Bigg[\Big|\int_t^s \pi(u,X_u^\pi,\epsilon_u)\sigma_{\epsilon_u} d W_u\Big|^2\Bigg]\le C_2\mathbb{E}_{t,x,i,j}\Bigg[\int_t^s|\pi(u,X_u^\pi,\epsilon_u)\sigma_{\epsilon_u}|^2d u\Bigg]
		\\ \le C_1C_2\mathbb{E}_{t,x,i,j}\Bigg[\int_t^s|\pi(u,X_u^\pi,\epsilon_u)|^2d u\Bigg] \le C_1C_2C^2\mathbb{E}_{t,x,i,j}\Bigg[\int_t^s|X_u^\pi|^2d u\Bigg].
	\end{aligned}
\end{equation}
Taking conditional expectation $\mathbb{E}_{t,x,i,j}[\cdot]$ on \eqref{calculus2}, and  using  \eqref{estimate2}, \eqref{estimate3} and \eqref{estimate4}, 
\begin{equation}
    \mathbb{E}_{t,x,i,j}[|X_s^\pi|^2]\le4\Bigg\{x^2+(TC_1C^2+TC_1+C_1C_2C^2)\int_t^s \mathbb{E}_{t,x,i,j}[|X_u^\pi|^2]d u\Bigg\},\;\forall \;s\in[t,T].
\end{equation}
By Gronwall inequality, $$\mathop{sup}\limits_{s\in[t,T]}\mathbb{E}_{t,x,i,j}[|X_s^\pi|^2]<\infty.$$ 
Therefore, Lemma \ref{lemma3} holds.
\end{proof}
\begin{lemma}\label{lemma4}
	$\forall \pi\in\Pi, \ (t,x,i,j)\in [0,T]\times\mathbb{R}^+ \times S^2, \ p\in[2,+\infty)$, 
	\begin{equation}
	\mathop{sup}\limits_{s\in[t,T]}\{\mathbb{E}_{t,x,i,j}[|X_s^\pi|^{-p}]\}<\infty.
	\end{equation}
\end{lemma}
\begin{proof}[Proof of Lemma \ref{lemma4}]
    By Ito's formula, $\forall\;s\in[t,T]$, 
    \begin{equation}
    \begin{aligned}
		(X_s^\pi)^{-1}=&(X_t^\pi)^{-1}+\int_t^s(-(X_u^\pi)^{-2})(\mu_{\epsilon_u}-r_{\epsilon_u})\pi(u,X_u^\pi,\epsilon_u) d u+\int_t^s(-(X_u^\pi)^{-1}) r_{\epsilon_u} d u
		\\
		&+\int_t^s (-(X_u^\pi)^{-2}) \pi(u,X_u^\pi,\epsilon_u)\sigma_{\epsilon_u} d W_u+\int_t^s (X_u^\pi)^{-3} \pi^2(u,X_u^\pi,\epsilon_u)\sigma_{\epsilon_u}^2 d u.
	\end{aligned}
\end{equation}
As such, using Jensen's inequality, 
\begin{equation}\label{calculus3}
    \begin{aligned}
		|\frac15(X_s^\pi)^{-1}|^p\le&\frac15\Bigg\{|X_t^\pi|^{-p}+\Big|\int_t^s(-(X_u^\pi)^{-2})(\mu_{\epsilon_u}-r_{\epsilon_u})\pi(u,X_u^\pi,\epsilon_u) d u\Big|^p
		\\&+\Big|\int_t^s(-(X_u^\pi)^{-1}) r_{\epsilon_u} d u\Big|^p
		+\Big|\int_t^s (-(X_u^\pi)^{-2}) \pi(u,X_u^\pi,\epsilon_u)\sigma_{\epsilon_u} d W_u\Big|^p
		\\&+\Big|\int_t^s (X_u^\pi)^{-3} \pi^2(u,X_u^\pi,\epsilon_u)\sigma_{\epsilon_u}^2 d u\Big|^p\Bigg\},
		\quad\forall\,s\in[t,T].
	\end{aligned}
\end{equation}
Using Condition $(ii)$ of  Definition\ref{admissible}  and  Hölder's inequality on \eqref{calculus3}, there exists a constant $C_{p,1}=C(p,T,C,C_1)>0$ such that 
\begin{equation}\label{estimate5}
    \begin{aligned}
		&\Big|\int_t^s(-(X_u^\pi)^{-2})(\mu_{\epsilon_u}-r_{\epsilon_u})\pi(u,X_u^\pi,\epsilon_u) d u\Big|^p+\Big|\int_t^s(-(X_u^\pi)^{-1}) r_{\epsilon_u} d u\Big|^p
		\\&+\Big|\int_t^s (X_u^\pi)^{-3} \pi^2(u,X_u^\pi,\epsilon_u)\sigma_{\epsilon_u}^2 d u\Big|^p
		\le C_{p,1}\int_t^s|X_u^\pi|^{-p} d u.
	\end{aligned}
\end{equation}
By the Burkholder-Davis-Gundy inequality, there exists a constant $C_{p,2}>0$ such that 
\begin{equation}\label{estimate6}
    \begin{aligned}
	&\mathbb{E}_{t,x,i,j}\Bigg[\Big|\int_t^s (-(X_u^\pi)^{-2}) \pi(u,X_u^\pi,\epsilon_u)\sigma_{\epsilon_u} d W_u\Big|^p\Bigg]
	\\ &\le C_{p,2}\mathbb{E}_{t,x,i,j}\Bigg[\Big(\int_t^s |(-(X_u^\pi)^{-2}) \pi(u,X_u^\pi,\epsilon_u)\sigma_{\epsilon_u}|^2 d u\Big)^{\frac{p}{2}}\Bigg]
	\\ &\le 
	C_{p,2}C_{p,1}\mathbb{E}_{t,x,i,j}\Bigg[(\int_t^s |X_u^\pi|^{-2} d u)^{\frac{p}{2}}\Bigg]
	\\
	&\le 	C_{p,2}C_{p,1}(s-t)^{\frac{p}{2}-1}\mathbb{E}_{t,x,i,j}\Bigg[\int_t^s |X_u^\pi|^{-p} d u\Bigg].
	\end{aligned}
\end{equation}
Taking conditional expectation $\mathbb{E}_{t,x,i,j}[\cdot]$ on \eqref{calculus3}, and by \eqref{estimate5} and \eqref{estimate6}, there exists a constant $C_{p,3}>0$ such that \begin{equation}
    \mathbb{E}_{t,x,i,j}[|X_s^\pi|^{-p}]\le5^{p-1}x^{-p}+C_{p,3}\int_t^s\mathbb{E}_{t,x,i,j}[|X_u^\pi|^{-p}]d u,\; \forall \;s\in[t,T].
\end{equation}
By Gronwall's inequality, $$\mathop{sup}\limits_{s\in[t,T]}\{\mathbb{E}_{t,x,i,j}[|X_s^\pi|^{-p}]\}<\infty.$$
Then Lemma \ref{lemma4} holds.
\end{proof}
\begin{lemma}\label{lemma5}
$\forall \pi\in\Pi, \ (t,x,i,j)\in [0,T]\times \mathbb{R}^+ \times S^2,\  p\in(-\infty,2]$, 
	
 \begin{equation*}	\mathop{sup}\limits_{s\in[t,T]}\{\mathbb{E}_{t,x,i,j}[|X_s^\pi|^{p}]\}<\infty.
\end{equation*}
\end{lemma}
\begin{proof}[Proof of Lemma \ref{lemma5}]
Using Lemma \ref{lemma3}, Lemma \ref{lemma4} and Hölder's inequality yields that Lemma \ref{lemma5} holds.  
\end{proof}
{\bf Step 4: Verify Condition $(b)$ of Theorem \ref{verification}.} Using Lemma \ref{lemma5}, we have,  $\forall \;(t,x,i,j)\in [0,T]\times \mathbb{R}^+ \times S^2$,
\begin{equation*}
    \begin{aligned}
		&\mathbb{E}_{t,x,i,j}\Bigg[\int_t^T|f_x^{\epsilon_u,j}(t,X^{\hat\pi}_u)\hat\pi(t,X^{\hat\pi}_u,\epsilon_u)
  \sigma_{\epsilon_u}|^2 d u\Bigg]
		=\mathbb{E}_{t,x,i,j}\Bigg[\int_t^T|(X_u^{\hat{\pi}})^{-\alpha_j}(g^{\epsilon_u,j}(t))^{\alpha_j}\hat\pi(t,X^{\hat\pi}_u,\epsilon_u)\sigma_{\epsilon_u}|^2d u\Bigg]
		\\
		&\le C_1C^2C_3\mathbb{E}_{t,x,i,j}\Bigg[\int_t^T|X^{\hat\pi}_u|^{2(1-\alpha_j)}d u\Bigg]
				\le C_1C^2C_3(T-t)\Big\{\mathop{sup}\limits_{s\in[t,T]}\mathbb{E}_{t,x,i,j}[|X_s^{\hat{\pi}}|^{2(1-\alpha_j)}]\Big\}<\infty,
	\end{aligned}
\end{equation*}
where $C_3>0$ is a constant only dependent on $g^{i,j},\alpha_j,i,j\in S$. 

Because $\{W_t\}_{t\in[t,T]}$ is a standard Brownian motion under the conditional probability $\mathbb{P}_{t,x,i,j}$,\\ $\Big\{\int^s_t f_x^{\epsilon_u,j}((t,X^{\hat\pi}_u)\hat\pi(t,X^{\hat\pi}_u,\epsilon_u)\sigma_{\epsilon_u}d W_u\Big\}_{s\in[t,T]}$ is a martingale under the conditional probability $\mathbb{P}_{t,x,i,j}$. Thus, Condition $(b)$ holds.

{\bf Step 5: Verify Condition $(c)$ of Theorem \ref{verification}, i.e., Assumptions \ref{assume1}-\ref{assume3} hold.} Using the same way as in Step 4, we get
$$\mathbb{E}_{t,x,i,j}\left[\int_t^T\!\!|f_x^{\epsilon_u,j}((t,X^{\pi}_u)\pi(t,X^{\pi}_u,\epsilon_u)\sigma_{\epsilon_u}|^2 d u\right]\!\!<\!\!\infty, \forall \pi\in\Pi,(t,x,i,j)\in [0,T]\times \mathbb{R}^+ \times S^2.$$ As such, $\Big\{\int^s_t f_x^{\epsilon_u,j}(u,X_u^\pi)\pi(u,X_u^\pi,\epsilon_u)\sigma_{\epsilon_u}d W_u\Big\}_{s\in[t,T]}$ is a martingale under the conditional probability $\mathbb{P}_{t,x,i,j}$. Thus, Assumption \ref{assume1} holds. 

We have the following estimations: %there exist constants $C_1$, $C_2$ and $C_3$ such that
\begin{equation*}
\left\{
    \begin{aligned}
            &\mathbb{E}_{t,x,i,j}[|f_t^{{\epsilon_u},j}(u,X_u^\pi)|^2]=\mathbb{E}_{t,x,i,j}[|\frac{\alpha_j}{1-\alpha_j}(X_u^\pi)^{1-\alpha_j}g_t^{\epsilon_u,j}(t)(g^{\epsilon_u,j}(t))^{\alpha_j-1}|^2]
		\le C_3\mathbb{E}_{t,x,i,j}[|X_u^\pi|^{2(1-\alpha_j)}],
		\\
		&\mathbb{E}_{t,x,i,j}[|\frac{1}{2}f^{{\epsilon_u},j}_{xx}(u,X_u^\pi)\sigma_{\epsilon_u}^2\pi^2(u,X_u^\pi,{\epsilon_u})|^2]\le  C_1^2C_3^2C^4\mathbb{E}_{t,x,i,j}[|X_u^\pi|^{2(1-\alpha_j)}],
		\\
		&\mathbb{E}_{t,x,i,j}[|f^{{\epsilon_u},j}_x(u,X_u^\pi) (\mu_{\epsilon_u}-r_{\epsilon_u})\pi(u,X_u^\pi,{\epsilon_u})|^2]\le C_1C_3C^2\mathbb{E}_{t,x,i,j}[|X_u^\pi|^{2(1-\alpha_j)}],
		\\
		&\mathbb{E}_{t,x,i,j}[|f^{{\epsilon_u},j}_x(u,X_u^\pi) X_u^\pi r_{\epsilon_u}|^2]\le C_1C_3\mathbb{E}_{t,x,i,j}[|X_u^\pi|^{2(1-\alpha_j)}],
		\\
		&\mathbb{E}_{t,x,i,j}[|\lambda_{\epsilon_u}f^{{\epsilon_u},j}(u,X_u^\pi)|^2]\le C_1C_3\mathbb{E}_{t,x,i,j}[|X_u^\pi|^{2(1-\alpha_j)}],
		\\
		&\mathbb{E}_{t,x,i,j}[|\lambda_{\epsilon_u}f^{\hat{\epsilon}_u,j}(u,X_u^\pi)|^2]\le C_1C_3\mathbb{E}_{t,x,i,j}[|X_u^\pi|^{2(1-\alpha_j)}].
    \end{aligned}\right.
\end{equation*}
Based on  the definition of generator $\mathcal{A}^\pi$, we also have
\begin{equation*}
    	\begin{aligned}
		\mathcal{A}^\pi f^{{\epsilon_u},j}(u,X_u^\pi)
		=&f_t^{{\epsilon_u},j}(u,X_u^\pi)+\frac{1}{2}f^{{\epsilon_u},j}_{xx}(u,X_u^\pi)\sigma_{\epsilon_u}^2\pi^2(u,X_u^\pi,{\epsilon_u})
		\\&+f^{{\epsilon_u},j}_x(u,X_u^\pi) (\mu_{\epsilon_u}-r_{\epsilon_u})\pi(u,X_u^\pi,{\epsilon_u})
		+f^{{\epsilon_u},j}_x(u,X_u^\pi) X_u^\pi r_{\epsilon_u}
		\\&-\lambda_{\epsilon_u}f^{{\epsilon_u},j}(u,X_u^\pi)+\lambda_{\epsilon_u}f^{\hat{\epsilon}_u,j}(u,X_u^\pi),\; (\hat{\epsilon}_u=3-{\epsilon_u}).
	\end{aligned}
\end{equation*}
By Cauchy-Schwarz inequality, 
\begin{equation*}
	    \mathbb{E}_{t,x,i,j}[|\mathcal{A}^\pi f^{\epsilon_u,j}(u,X_u^\pi)|^2]\le C_4\mathbb{E}_{t,x,i,j}[|X_u^\pi|^{2(1-\alpha_j)}],
	\end{equation*}
where $C_4>0$ is a constant only depending on $C,\ C_1,\ C_3$. Additionally, using Lemma \ref{lemma5},
\begin{equation*}
        \mathop{sup}\limits_{s\in[t,T]}\{\mathbb{E}_{t,x,i,j}[|\mathcal{A}^\pi f^{\epsilon_u,j}(u,X_u^\pi)|^2]\}<\infty.
\end{equation*}
Then 
\begin{equation*}
       \begin{aligned}
		&\mathop{sup}\limits_{h\in(0,T\!-\!t]}\!\mathbb{E}_{t,x,i,j}\!\Bigg[\!\Big|\!\frac{1}{h} \!\int^{t\!+\!h}_t\! \mathcal{A}^\pi\! f^{\epsilon_u,j}\!(\!u,\!X_u^\pi)d u\Big|^2\!\Bigg]\!
		\!\le\!
		\mathop{sup}\limits_{h\in(0,T\!-\!t]}\!\mathbb{E}_{t,x,i,j}\!\Bigg[\!\frac{1}{h} \!\int^{t\!+\!h}_t\! |\mathcal{A}^\pi\! f^{\epsilon_u,j}\!(\!u,\!X_u^\pi)|^2d u\!\Bigg]\!
		\\
  &\le
		\mathop{sup}\limits_{s\in[t,T]}\mathbb{E}_{t,x,i,j}[|\mathcal{A}^\pi f^{\epsilon_u,j}(u,X_u^\pi)|^2]<\infty.
	\end{aligned} 
\end{equation*}
As such, $\Big\{\frac{1}{h} \int^{t+h}_t \mathcal{A}^\pi f^{\epsilon_u,j}(u,X_u^\pi)d u\Big\}_{h\in(0,T-t]}$ is $L^2$-bounded under $\mathbb{P}_{t,x,i,j}$, and thus uniformly integrable. Assumption \ref{assume2} holds.

Because we have just proved that $\Big\{\int^s_t f_x^{\epsilon_u,j}(u,X_u^\pi)\pi(u,X_u^\pi,\epsilon_u)\sigma_{\epsilon_u}d W_u\Big\}_{s\in[t,T]}$ is a martingale under $\mathbb{P}_{t,x,i,j}$, integrating \eqref{ito2} over interval $(t,t+h)$ and taking conditional expectation $\mathbb{E}_{t,x,i,j}[\cdot]$, we have 
\begin{equation*}
	    \mathbb{E}_{t,x,i,j}[f^{\epsilon_{t+h},j}(t+h,X^\pi(t+h))]-f^{i,j}(t,x)= \mathbb{E}_{t,x,i,j}\Bigg[\int^{t+h}_t \mathcal{A}^\pi f^{\epsilon_u,j}(u,X_u^\pi)d u\Bigg].
\end{equation*}
Using Assumption \ref{assume2} holds, we obtain
\begin{equation*}
        	\mathop{lim}\limits_{h\to 0^+}\mathbb{E}_{t,x,i,j}\Bigg[\frac{1}{h} \int^{t+h}_t \mathcal{A}^\pi f^{\epsilon_u,j}(u,X_u^\pi)d u\Bigg]=\mathcal{A}^\pi f^{i,j}(t,x).
    \end{equation*}
Then
\begin{equation*} 
        	\mathop{lim}\limits_{h\to 0^+}\mathbb{E}_{t,x,i,j}\Big[ \int^{t+h}_t \mathcal{A}_{\epsilon_u}^\pi f^{\epsilon_u,j}(u,X_u^\pi)d u\Big]=0.
    \end{equation*}
Hence
\begin{equation*}
	    \mathop{lim}\limits_{h\to 0^+}\mathbb{E}_{t,x,i,j}[f^{\epsilon_{t+h},j}(t+h,X^\pi(t+h))]=f^{i,j}(t,x).
	\end{equation*}
i.e.,  Assumption \ref{assume3} holds. Thus we complete the proof of Theorem \ref{check}.
\end{proof}

Next, we consider two special cases.
\begin{corollary}
When the investor exhibits the same risk aversion in both the bull and bear regimes, i.e., \(\alpha_1 = \alpha_2 = \alpha\), the equilibrium strategy from \eqref{strategy2} is given by
\begin{equation}\label{equ:pi1}
    \hat{\pi}(t,x,i) = \frac{\mu_i - r_i}{\alpha \sigma_i^2}\,x.
\end{equation}
\end{corollary}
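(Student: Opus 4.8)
The plan is to specialize the feedback formula \eqref{strategy2}---which Theorem~\ref{check} has already certified as the equilibrium strategy of the original problem \eqref{IP}--\eqref{op1}---to the case $\alpha_1=\alpha_2=\alpha$. No new analysis of the ODE system \eqref{odes1} is needed; in particular one need not solve \eqref{odes1} or even know much about its solution beyond positivity.

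First I would observe that when $\alpha_1=\alpha_2=\alpha$ the two exponents occurring in \eqref{strategy2} (and in the definition of $A_i(t)$) coincide: $\tfrac{\alpha_1}{1-\alpha_1}=\tfrac{\alpha_2}{1-\alpha_2}=\tfrac{\alpha}{1-\alpha}$. Writing $\Phi_i(t):=p(t,i,1)\bigl(g^{i,1}(t)\bigr)^{\alpha/(1-\alpha)}+p(t,i,2)\bigl(g^{i,2}(t)\bigr)^{\alpha/(1-\alpha)}$, the numerator of the fraction in \eqref{strategy2} is exactly $\Phi_i(t)$, while its denominator is $\alpha\,\Phi_i(t)$. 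Since $g^{i,1}(t),g^{i,2}(t)>0$ by Theorem~\ref{odesexistence} and $p(t,i,1)+p(t,i,2)=1$ with $p(t,i,j)\ge 0$, we have $\Phi_i(t)>0$, so the fraction is well defined and equals $1/\alpha$; that is, $A_i(t)\equiv 1/\alpha$. Substituting back yields $\hat\pi(t,x,i)=\dfrac{\mu_i-r_i}{\sigma_i^2}\,x\cdot\dfrac1\alpha=\dfrac{\mu_i-r_i}{\alpha\sigma_i^2}\,x$, which is \eqref{equ:pi1}.

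There is essentially no obstacle here: the result is a one-line algebraic cancellation, and the only inputs are the positivity of the $g^{i,j}$ from Theorem~\ref{odesexistence} (to guarantee $\Phi_i(t)\neq 0$) together with the already-established optimality of \eqref{strategy2} from Theorem~\ref{check}. As a sanity check one may also note that with $A_i\equiv 1/\alpha$ the linear wealth dynamics \eqref{LSDE} reduce to those of the classical CRRA Merton problem under regime-switching; this is consistent with the fact that $\alpha_1=\alpha_2$ forces $u^1\equiv u^2$, so the terminal utility is regime-independent and the certainty-equivalent aggregation in \eqref{objective function} plays no role.
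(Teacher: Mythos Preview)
Your proposal is correct and matches the paper's own treatment: the paper states this corollary without proof, treating it as an immediate consequence of \eqref{strategy2}, and your one-line algebraic cancellation (numerator $=\Phi_i(t)$, denominator $=\alpha\,\Phi_i(t)$, hence $A_i(t)\equiv 1/\alpha$) is exactly the intended observation.
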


It is clear that when the utility function is independent of beliefs, i.e., \(u^j(x) = u(x) = \frac{1}{1-\alpha}x^{1-\alpha}\), we have
\[
\sum_{j \in S} p(\!t,\!i,\!j) u^{-1}\left(\mathbb{E}_{t,\!x,\!i,\!j}[u(X^\pi_T)]\right) 
\neq u^{-1}\left(\sum_{j \in S} p(\!t,\!i,\!j) \mathbb{E}_{t,\!x,\!i,\!j}[u(X^\pi_T)]\right) = u^{-1} \left(\mathbb{E}_{t,\!x,\!i}[u(X^\pi_T)]\right),
\]
which indicates that even when \(u^j(x)\) is beliefs-independent, the objective function \eqref{objective function} is not equivalent to the standard expected utility maximization problem.

However, the strategy \eqref{equ:pi1} matches the optimal investment strategy derived in \cite{sotomayor2009explicit}, meaning that the equilibrium strategy in this case coincides with the optimal strategy of the expected utility maximization problem under regime-switching. The reason these different objectives yield the same result is that \eqref{equ:pi1} is optimal for the following problem:
\begin{equation*}
    \max_\pi \;\mathbb{E}_{t,x,i,j}\left[u\left(X^\pi_T\right)\right].
\end{equation*}
As \(u^{-1}\) is an increasing function, it follows directly that \eqref{equ:pi1} maximizes both \eqref{objective function} and the expected utility maximization problem under regime-switching.

\begin{corollary}\label{cor:2}
When the market coefficients are independent of \(i\) and \(\epsilon_T\) follows a Bernoulli distribution, i.e., \(\mathbb{P}(\epsilon_T = 1) = p\) and \(\mathbb{P}(\epsilon_T = 2) = q = 1 - p\), the equilibrium investment strategy is given by
\begin{equation*}\label{equ:pi2}
    \hat{\pi}(t,x) = \frac{\mu - r}{\sigma^2} \, x \, \frac{p \left(g^{1}(t)\right)^{\frac{\alpha_1}{1 - \alpha_1}} + q \left(g^{2}(t)\right)^{\frac{\alpha_2}{1 - \alpha_2}}}{\alpha_1 p \left(g^{1}(t)\right)^{\frac{\alpha_1}{1 - \alpha_1}} + \alpha_2 q \left(g^{2}(t)\right)^{\frac{\alpha_2}{1 - \alpha_2}}},
\end{equation*}
where \(g^{1}(t)\) and \(g^{2}(t)\) satisfy the following ODEs 
\begin{equation*}
    \left\{
	\begin{aligned}
		\frac{\alpha_1}{1-\alpha_1}g_t^{1}(t)=&\Bigg\{\frac{1}{2}\alpha_1 (\frac{\mu-r}{\sigma})^2 A^2(t)-(\frac{\mu-r}{\sigma})^2 A(t)
		-r\Big]\Bigg\}g^{1}(t),
		\\
		\frac{\alpha}{1-\alpha}g_t^{2}(t)=&\Bigg\{\frac{1}{2}\alpha (\frac{\mu-r}{\sigma})^2 A^2(t)-(\frac{\mu-r}{\sigma})^2 A(t)
		-r\Big]\Bigg\}g^{2}(t), 
		\\g^{1}(T)=g^{2}(T)&=1,
	\end{aligned}
	\right.
\end{equation*}
where
\begin{equation*}
A(t)\mathop{=}\limits^{\Delta}\frac{p(g^{1}(t))^{\frac{\alpha_1}{1-\alpha_1}}+q(g^{2}(t))^{\frac{\alpha_2}{1-\alpha_2}}}{\alpha_1 p(g^{1}(t))^{\frac{\alpha_1}{1-\alpha_1}}+\alpha_2 q(g^{2}(t))^{\frac{\alpha_2}{1-\alpha_2}}}.
\end{equation*}
\end{corollary}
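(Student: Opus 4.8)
The plan is to derive Corollary \ref{cor:2} as a direct specialization of Theorems \ref{verification}, \ref{odesexistence}, and \ref{check}, using the observation that when the market coefficients $r_i,\mu_i,\sigma_i$ do not depend on the regime $i$ and $\epsilon$ is ``frozen'' at the terminal time as a Bernoulli random variable, the four-dimensional system \eqref{odes1} collapses to two decoupled equations. Concretely, I would set $r_1=r_2=r$, $\mu_1=\mu_2=\mu$, $\sigma_1=\sigma_2=\sigma$, and take $\lambda_1=\lambda_2=0$ so that by \eqref{probability} we have $p(t,1,1)=p(t,2,2)=1$, $p(t,1,2)=p(t,2,1)=0$; then the $\epsilon_T=j$ conditioning with $\epsilon_t=i$ forces $i=j$, and the only relevant functions are $g^{1,1}$ and $g^{2,2}$, which I would rename $g^1$ and $g^2$. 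To get the genuine Bernoulli-$p$ structure rather than a degenerate one, I would instead argue at the level of the objective \eqref{objective function}: when $\epsilon_T$ is an exogenous Bernoulli variable independent of $W$ and the coefficients are regime-free, the wealth dynamics \eqref{wealth} no longer depend on $\epsilon$ at all, so the infinitesimal generator loses its $\lambda_i$ and $q_{ik}$ terms, and the extended HJB system \eqref{(a)} reduces exactly to the stated pair of ODEs for $g^1,g^2$ with the weights $p,q$ appearing through $A(t)$ in the feedback map \eqref{strategy3}.

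The key steps, in order, are: (i) write out the generator $\mathcal{A}^\pi$ in this reduced setting and observe that the terms $-\lambda_i f^{i,j}+\sum_{k\neq i}q_{ik}f^{k,j}$ vanish, so that $\mathcal{A}^\pi f^{i,j}(t,x)=f^{i,j}_t+\tfrac12 f^{i,j}_{xx}\sigma^2\pi^2+f^{i,j}_x(\mu-r)\pi+f^{i,j}_x xr$; (ii) substitute the ansatz $f^{j}(t,x)=\tfrac{1}{1-\alpha_j}x^{1-\alpha_j}(g^j(t))^{\alpha_j}$ exactly as in \eqref{solution1}, with the two terminal conditions $f^j(T,x)=\tfrac{1}{1-\alpha_j}x^{1-\alpha_j}$, i.e. $g^j(T)=1$; (iii) compute the first-order condition for the supremum in \eqref{(a)}, which — since the probabilities $p(t,i,j)$ are now replaced by the fixed Bernoulli weights $p,q$ — yields $\hat\pi(t,x)=\frac{\mu-r}{\sigma^2}x A(t)$ with $A(t)$ as displayed in the corollary; (iv) plug $\hat\pi$ back into $\mathcal{A}^{\hat\pi}f^j=0$ and separate variables, which produces precisely the two scalar ODEs in the statement; (v) invoke Theorem \ref{odesexistence} (whose proof specializes verbatim, the $\lambda$-terms now being absent so the auxiliary comparison arguments of Lemmas \ref{lemma1}--\ref{lemma2} are only needed in the trivial form) to get a unique global positive $C^1$ solution; and (vi) invoke Theorem \ref{check}, noting that Steps 1--5 of its proof — admissibility of the linear-in-$x$ strategy, Assumption \ref{assume4}, the moment bounds of Lemmas \ref{lemma3}--\ref{lemma5}, and the martingale/uniform-integrability conditions — all go through unchanged because they only used the boundedness of the coefficients and the power form of $f$, concluding that $\hat\pi$ is the equilibrium strategy.

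I expect the only genuine subtlety to be bookkeeping about which $g^{i,j}$ survive and why the cross-regime coupling disappears: one has to be careful that ``market coefficients independent of $i$'' kills the dependence of the wealth SDE on $\epsilon_t$, while ``$\epsilon_T$ Bernoulli'' is what lets us keep two distinct utility exponents $\alpha_1\neq\alpha_2$ and hence two distinct functions $g^1,g^2$ that remain coupled only through the common factor $A(t)$ in the drift. Everything else is a routine restriction of the already-proven machinery, so the ``hard part'' is really just presenting the reduction cleanly; no new estimate or existence argument is required beyond what Theorems \ref{odesexistence} and \ref{check} already supply. I would therefore keep the proof short, stating that it follows the lines of the proofs of Theorems \ref{odesexistence} and \ref{check} with the simplification that the generator has no regime-transition terms, and then exhibit the separated ODEs and the feedback form explicitly.
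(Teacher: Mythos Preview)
Your proposal is correct and in fact supplies the argument the paper omits: the paper states this corollary without proof, remarking only that the resulting model and answer coincide with those of \cite{desmettre2023equilibrium}. Your route---specializing the verification theorem and the ansatz \eqref{solution1}, and in particular your observation that the Bernoulli-$\epsilon_T$ hypothesis must be read as an exogenous static random variable (so the generator loses its transition terms) rather than as $\lambda_1=\lambda_2=0$ in \eqref{probability}---is exactly the natural derivation the paper leaves implicit.
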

In this case, the financial model aligns with that of \cite{desmettre2023equilibrium}, and as shown in Corollary \eqref{cor:2}, the results are consistent with those found in \cite{desmettre2023equilibrium}.

\vskip 15pt
\section{\bf Economic Analysis and Numerical Examples}\label{sec5}
\subsection{\bf The condition of market coefficients} Following  \cite{fama1989business} and \cite{french1987expected}, we assume  $r_i\le \mu_i,\,\forall\,i\in S$. Moreover, we observe that the feedback form of equilibrium strategy \eqref{strategy2} is directly proportional to the excess return of the stock $(\mu_i-r_i)$ and inversely proportional to the variance of the stock $\sigma_i^2$ under the given regime. Therefore, in order to study the behavior of equilibrium strategy, it is necessary to investigate the relationship between $(\mu_i-r_i)$ and $\sigma_i^2$ under different regimes. Using empirical data, \cite{fama1989business} show that positive expected excess returns $(\mu_i-r_i)$ are lower during a bull market and higher during a bear market. The empirical study of \cite{schwert1989does} demonstrates that  the stock volatility is higher during a bear market than a bull market. The study by \cite{french1987expected} further confirms the positive correlation between the excess return of stocks and the volatility of stocks returns. Additionally, it is noted that the ratio $\frac{\mu_i-r_i}{\sigma^2_i}$ is bigger during strong market conditions. Therefore, we assume that the market coefficients satisfy the following conditions: 
\begin{equation}\label{condition}
    0<\mu_1-r_1<\mu_2-r_2,\;0<\sigma_1<\sigma_2,\;\frac{\mu_1-r_1}{\sigma_1^2}>\frac{\mu_2-r_2}{\sigma_2^2}.
\end{equation}
\subsection{\bf The property of equilibrium strategy} Let $\pi^*$ denote the proportion of investment in risky asset when investor adapts equilibrium strategy. Then we have \begin{equation}\label{strategy4}
    \begin{aligned}
    \pi^*(t,x,i)=\frac{\hat\pi(t,x,i)}{x}=&\frac{\mu_i-r_i}{\sigma_i^2}\frac{p(t,i,1)(g^{i,1}(t))^{\frac{\alpha_1}{1-\alpha_1}}+p(t,i,2)((g^{i,2}(t))^{\frac{\alpha_2}{1-\alpha_2}}}{\alpha_1 p(t,i,1)(g^{i,1}(t))^{\frac{\alpha_1}{1-\alpha_1}}+\alpha_2 p(t,i,2)((g^{i,2}(t))^{\frac{\alpha_2}{1-\alpha_2}}}
    \\=&\frac{\mu_i-r_i}{\sigma_i^2}A_i(t).
    \end{aligned}
\end{equation}
Compared to the expected utility maximization under regime-switching (see \cite{sotomayor2009explicit}), we see that the equilibrium investment is a Merton's fraction multiplied by a deterministic function. 
\begin{itemize}
    \item Without loss of generality, we continue to denote $\pi^*$ as the  equilibrium strategy. \eqref{strategy4} indicates that $\pi^*(t,x,i)$ is independent of the investor's current wealth $x$. Thus  $\pi^*(t,x,i)=\pi^*(t,i)$. Although we have the coefficients condition $\frac{\mu_1-r_1}{\sigma_1^2}>\frac{\mu_2-r_2}{\sigma_2^2}$, unfortunately, it is not straightforward to determine the relative magnitudes of $A_i(t),i\in S$. Consequently, it is not simple to infer the proportion of investment in the risky asset under the equilibrium strategy during bull and bear markets. This comparison will be further explored in the subsequent numerical analysis.
\end{itemize}

A key innovation of the model presented in this paper is that the investor's utility function depends on the belief of regime at the terminal time. In other words, investors can adjust their risk aversion in real-time according to the current regime. In contrast, the study by \cite{rieder2005portfolio} assumes that  a utility function that is constant and independent of the regime, leading to a constant level of risk aversion for investors. In fact, a series of empirical studies, such as \cite{gordon2000preference}, \cite{guiso2018time} and \cite{berrada2018asset}, demonstrate that a low (high) level of risk aversion is associated with bull (bear) regime, which manifest the rationality of our model. Next, we analyze the difference in equilibrium strategies between adjusting risk aversion based on regime and constant risk aversion. Consider two different coefficients of relative risk aversion $\alpha<\beta,\alpha,\beta\in\mathbb{R}^+\backslash\{1\}$. 

Investor A adopts a constant coefficient of relative risk aversion $\alpha$. Letting $\alpha_1=\alpha_2=\alpha$ in \eqref{strategy4}, we have \begin{equation}\label{strategy5}
    \pi^*_1(t,i)=\frac{\mu_i-r_i}{\alpha\sigma_i^2},
\end{equation} 
which is the Merton's fraction.

Investor B adopts a constant coefficient of relative risk aversion $\beta$. Letting $\alpha_1=\alpha_2=\beta$ in \eqref{strategy4}, we have 
\begin{equation}\label{strategy6}
    \pi^*_2(t,i)=\frac{\mu_i-r_i}{\beta\sigma_i^2}.
\end{equation}

Investor C adopts coefficients of relative risk aversion $\alpha$ and $\beta$ in bull and bear markets, respectively. Letting $\alpha_1=\alpha, \alpha_2=\beta$ in \eqref{strategy4}, we have 
\begin{equation}\label{strategy7}
    \pi^*(t,i)=\frac{\mu_i-r_i}{\sigma_i^2}\frac{p(t,i,1)(g^{i,1}(t))^{\frac{\alpha}{1-\alpha}}+p(t,i,2)((g^{i,2}(t))^{\frac{\beta}{1-\beta}}}{\alpha p(t,i,1)(g^{i,1}(t))^{\frac{\alpha}{1-\alpha}}+\beta p(t,i,2)((g^{i,2}(t))^{\frac{\beta}{1-\beta}}}.
\end{equation}
Obviously,
\begin{equation}\label{character1}
    \pi^*_2(t,i)<\pi^*(t,i)<\pi^*_1(t,i),\;\forall\; (t,i)\in [0,T)\times S.
\end{equation}

We begin by considering the case of bull market. On one hand, Investor C invests strictly more in risky assets than Investor B. This aligns with the intuitive notion that Investor C has lower risk aversion in a bull market compared to Investor B. On the other hand, Investor C's proportion of investment in risky assets is strictly less than that of Investor A, despite both having the same risk aversion in a bull market.  This result can only be attributed to the different levels of risk aversion between Investor C and Investor A during bear markets. Intuitively, Investor C's proportion of investment in risky assets in a bull market is also influenced by the possibility of the market transitioning to a bear state, where risk aversion increases. This aligns with the obtained results. When the market is in a bear state, the conclusions are entirely symmetrical and will not be elaborated further.

Additionally, let $t\to T^-$ in \eqref{strategy7}, we obtain \begin{equation}\label{character2}
    \lim_{t\to T^-}\pi^*(t,1)=\pi^*_1(t,1), \lim_{t\to T^-}\pi^*(t,2)=\pi^*_1(t,2).
\end{equation}
This can be interpreted as follows: when the market is in a bull state and approaching the terminal time, the investor C adopting the equilibrium strategy tends to assume that the regime will remain unchanged. Consequently, they disregard the potential increase in risk aversion due to a possible shift to a bear market. This results in the equilibrium investment proportion of investor C converging towards that of investor A, who maintains constant risk aversion. The conclusions are entirely symmetrical when the market is in a bear state.
\begin{remark}
    The equilibrium strategies in \eqref{strategy5} and \eqref{strategy6} are independent of current wealth $x$ and time $t$, only depending on regime $i$. This result is consistent with the optimal portfolio problem with constant market coefficients and coefficient of relative risk aversion (By dynamic programming principle and solving HJB equation, the optimal portfolio can be obtained; see \cite{yong2012stochastic}, Chapter 4). 
\end{remark}
\subsection{\bf Numerical analysis} The parameters are as follows: $\mu_1=0.15$, $r_1=0.05$, $\sigma_1=0.25$, $\mu_2=0.25$, $r_2=0.01$, $\sigma_2=0.6$, which satisfies Condition \eqref{condition}. Considering the complexity of $A_i(t)$ in $\pi^*$, we present the equilibrium strategy $\pi^*$ in the form of curve graphs.
\subsubsection{\bf Effects of $t$} Taking $\alpha =2$, $\beta=3$,  $\lambda_1=1$, $\lambda_2=1$, $T=10$, we have following figure:
\begin{figure}[h]
  \centering
  \subcaptionbox{\label{fig1}}
    {\includegraphics[width=0.45\linewidth]{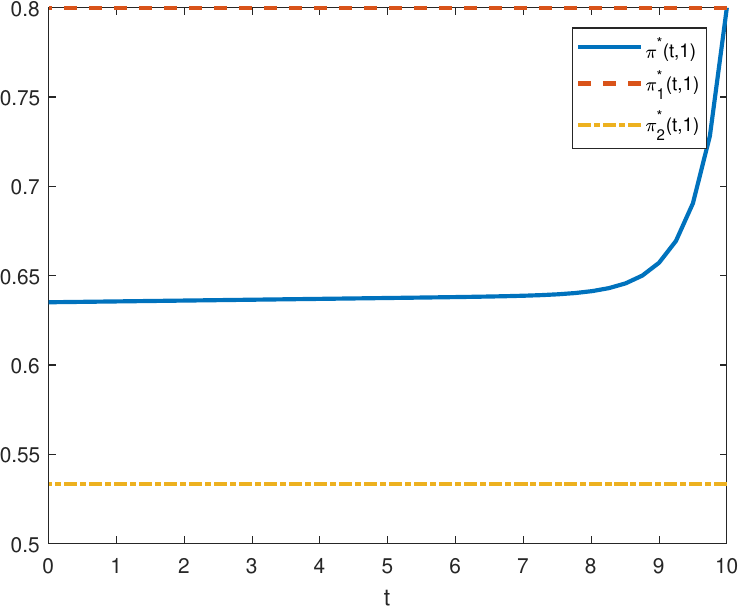}}
  \subcaptionbox{\label{fig2}}
    {\includegraphics[width=0.45\linewidth]{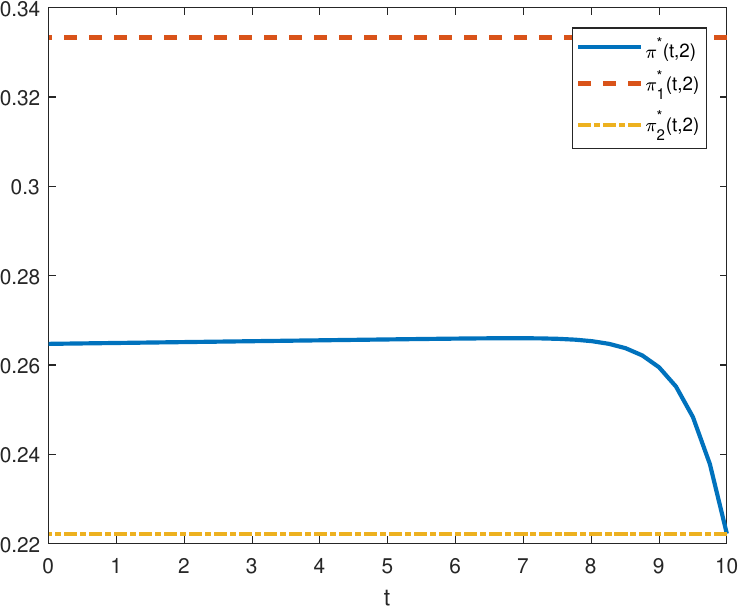}}
  \caption{Effects of $t$.}
  \label{Fig0}
\end{figure}
From Figure \ref{Fig0}, properties \eqref{character1} and \eqref{character2} are evident. Figure \ref{Fig0}\subref{fig1} indicates that the equilibrium investment proportion during a bull market remains relatively stable for $t<8$, but rises rapidly after $t=8$, reaching $\pi^*_1(t,1)=0.8$. Figure \ref{Fig0}\subref{fig2} indicates a similar result. The stability of the investment proportion for $t<8$ can be considered a balance achieved under the joint influence of the current regime and the potential changes in regime. In fact, by \eqref{probability}, we have \begin{equation}\label{limit}
    \lim_{t\to -\infty}p(t,i,j)=1-\frac{\lambda_j}{\lambda_1+\lambda_2},\;\forall\;i,j\in S.
\end{equation} This implies that at earlier times $t$, the probability distribution of the regime at the terminal time is relatively stable, meaning that the distribution of investors' different risk aversions is also stable. As time progresses, the probability that the current regime matches the terminal regime increases and tends towards 1, i.e, $p(t,i,i)\uparrow1$, as  $t\to T^-$, $\forall \;i\in S$. Rationally, investors become increasingly inclined to invest assuming no change in regime, disrupting the balance for $t<8$. This interprets the rapid changes in equilibrium investment for $t>8$.
\subsubsection{\bf Effects of $\lambda$} Taking $\alpha =2,\beta=3,T=10$, varying the values of $\lambda_1$ and $\lambda_2$, we obtain following figures: 
\begin{figure}[h]
  \centering
  \subcaptionbox{$\pi^*(t,1)$\label{fig3}}
    {\includegraphics[width=0.45\linewidth]{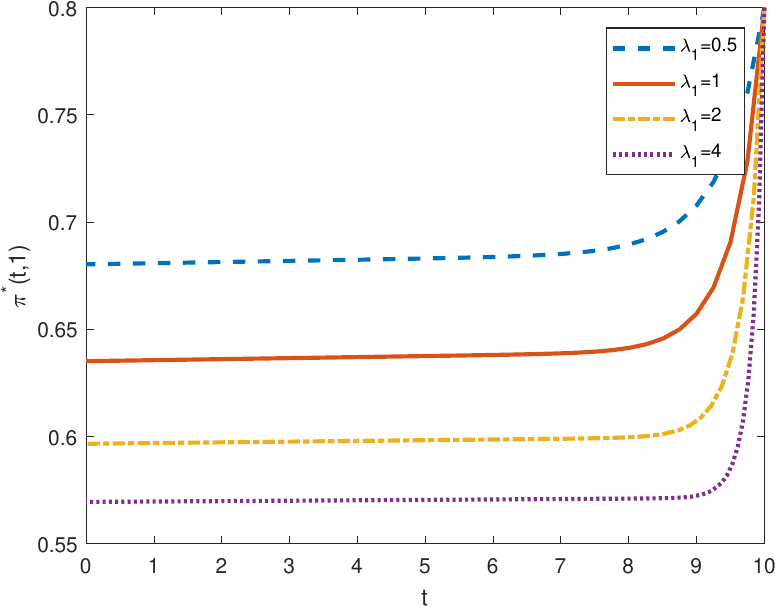}}
  \subcaptionbox{$\pi^*(t,2)$\label{fig4}}
    {\includegraphics[width=0.45\linewidth]{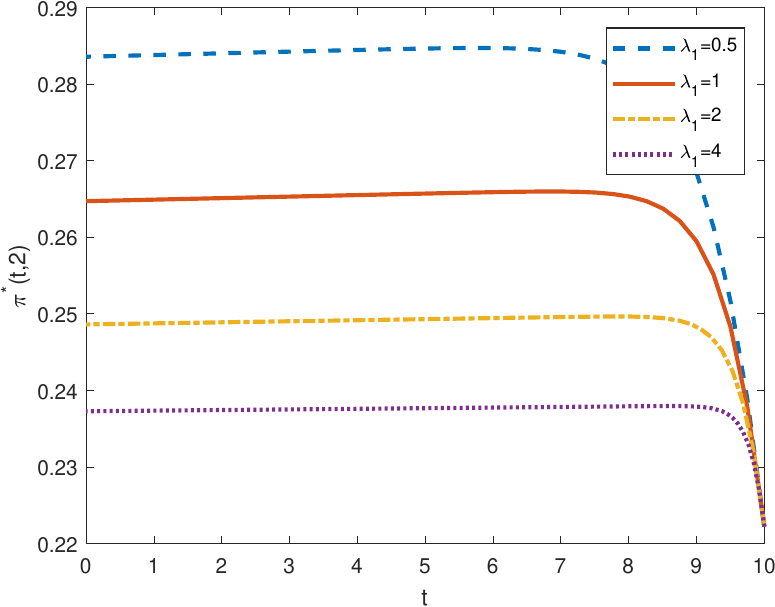}}
  \caption{Effects of $\lambda_1$ ($\lambda_2=1$).}
  \label{Fig1}
\end{figure}
\begin{figure}[h]
  \centering
  \subcaptionbox{$\pi^*(t,1)$\label{fig5}}
    {\includegraphics[width=0.45\linewidth]{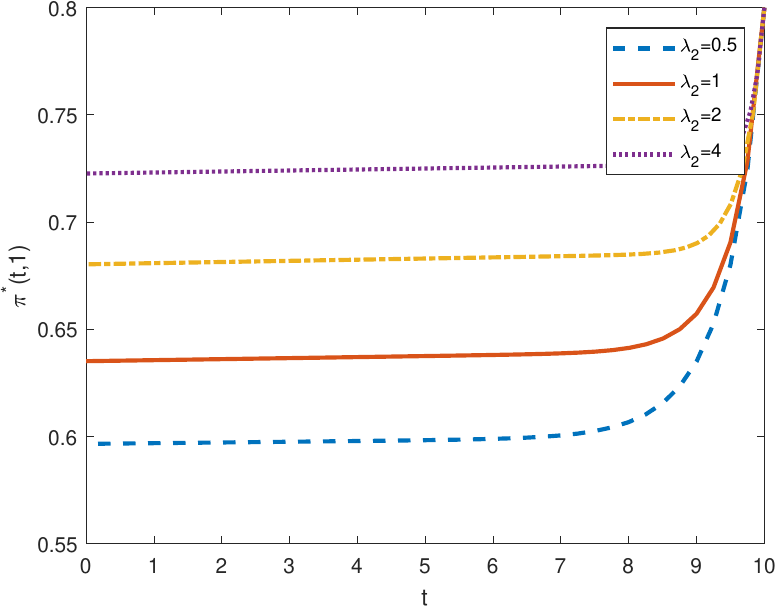}}
  \subcaptionbox{$\pi^*(t,2)$\label{fig6}}
    {\includegraphics[width=0.45\linewidth]{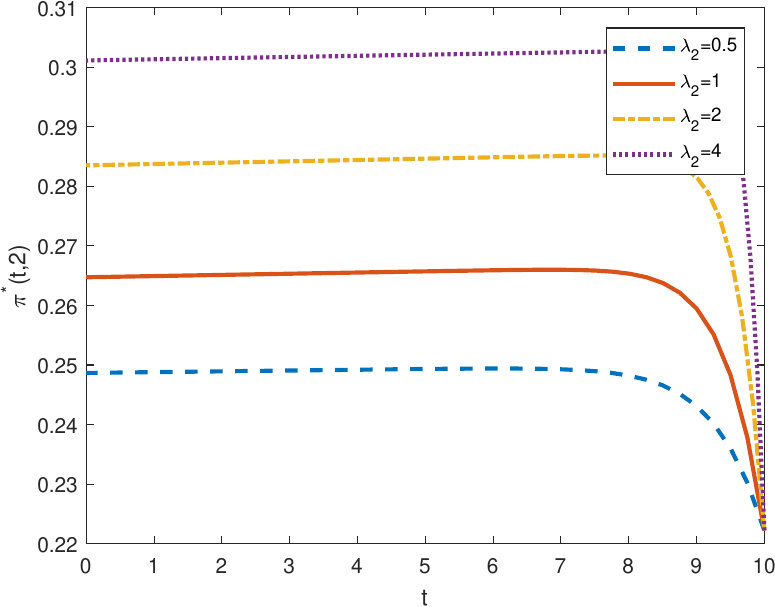}}
  \caption{Effects of $\lambda_2 $ ($\lambda_1=1$).}
  \label{Fig2}
\end{figure}
Figures \ref{Fig1} and \ref{Fig2} show that as the ratio $\frac{\lambda_2}{\lambda_1}$ increases, the equilibrium investment proportion in risky assets rises, irrespective of whether the regime is a bull or bear market. From calculations, the stationary distribution of Markov process $\{\epsilon_t\}_{t\in[0,T]}$ is given by $\widetilde\pi(1)=\frac{\lambda_2}{\lambda_1+\lambda_2},\widetilde\pi(2)=\frac{\lambda_1}{\lambda_1+\lambda_2}$. Consequently, as the ratio $\frac{\lambda_2}{\lambda_1}$ increases, the probability of the regime being bull market becomes higher. Given that investors exhibit lower risk aversion in bull markets compared to bear markets, it is reasonable that the equilibrium investment proportion in risky assets rises as $\frac{\lambda_2}{\lambda_1}$ increases.
\begin{figure}[h]
  \centering
  \subcaptionbox{$\pi^*(t,1)$\label{fig7}}
    {\includegraphics[width=0.45\linewidth]{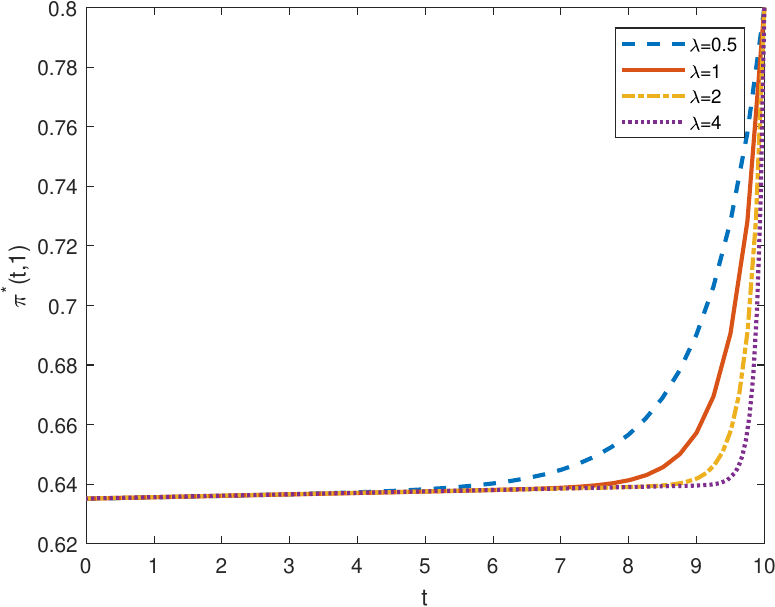}}
  \subcaptionbox{$\pi^*(t,2)$\label{fig8}}
    {\includegraphics[width=0.45\linewidth]{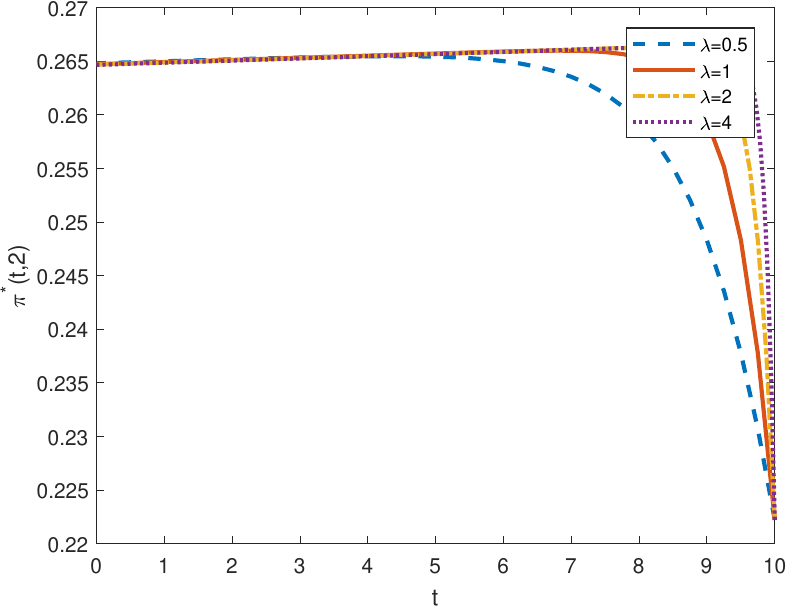}}
  \caption{Effects of $\lambda$ ($\lambda_1=\lambda_2=\lambda$).}
  \label{Fig3}
\end{figure}

In Figure \ref{Fig3}, with the condition $\lambda_1=\lambda_2$, it is noted that the stable portion of the plotted curve almost coincides. Considering Figures \ref{Fig1} and \ref{Fig2}, it is evident that the stable portion of the equilibrium investment is influenced by the ratio $\frac{\lambda_2}{\lambda_1}$, rather than independent $\lambda_1$ or $\lambda_2$. Furthermore, Figure \ref{Fig3} illustrates that the larger the objective of $\lambda$, the longer the stable portion in the first half of the equilibrium investment, and the steeper the slope in the second half. In fact, as $\lambda$ increases, the speed at which $p(t,i,i)\uparrow1\quad(t\to T^-)$ accelerates. Consequently, when $t$ is close to $T$, investors' tendency to invest according to the current regime increases more rapidly. This results in a steeper slope in the second half of the equilibrium investment curve, as shown in the Figure \ref{Fig3}. From another perspective, the expected time $\mathbb{E}[S_i]$ that the regime remains in state $i$ is $\frac{1}{\lambda_i}$. Thus, a larger $\lambda$ implies more frequent regime-switching. As a result, investors are more inclined to consider not only the current market state but also potential future regime-switching in their investment decisions. Consequently, as $\lambda$ increases, the stable portion of the equilibrium investment in the first half becomes longer.
\subsubsection{\bf Effects of risk aversion $\alpha,\beta$} Taking $\lambda_1=\lambda_2=1,T=10$, varying the values of $\alpha$ and $\beta$, we obtain following figures: 
\begin{figure}[h]
  \centering
  \subcaptionbox{$\pi^*(t,1)$\label{fig9}}
    {\includegraphics[width=0.45\linewidth]{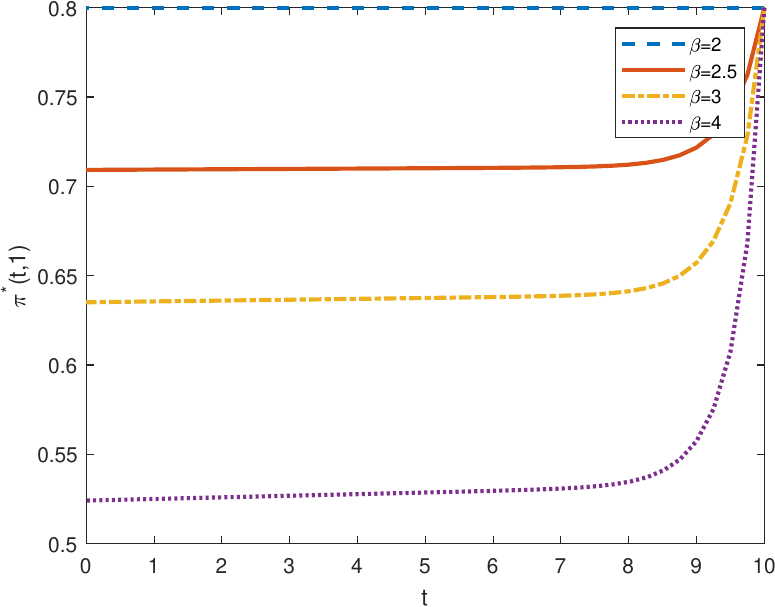}}
  \subcaptionbox{$\pi^*(t,2)$\label{fig10}}
    {\includegraphics[width=0.45\linewidth]{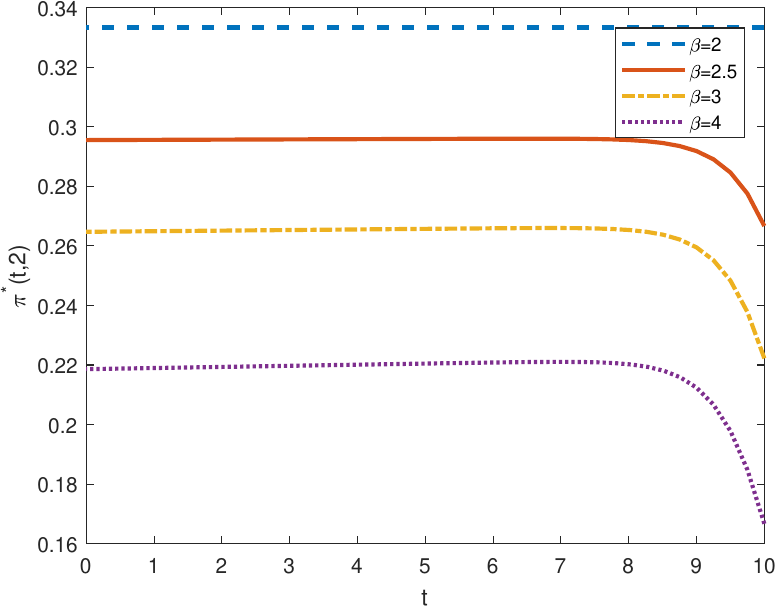}}
  \caption{Effects of $\beta$ ($\alpha=2$).}
  \label{Fig4}
\end{figure}
\begin{figure}[h]
  \centering
  \subcaptionbox{$\pi^*(t,1)$\label{fig11}}
    {\includegraphics[width=0.45\linewidth]{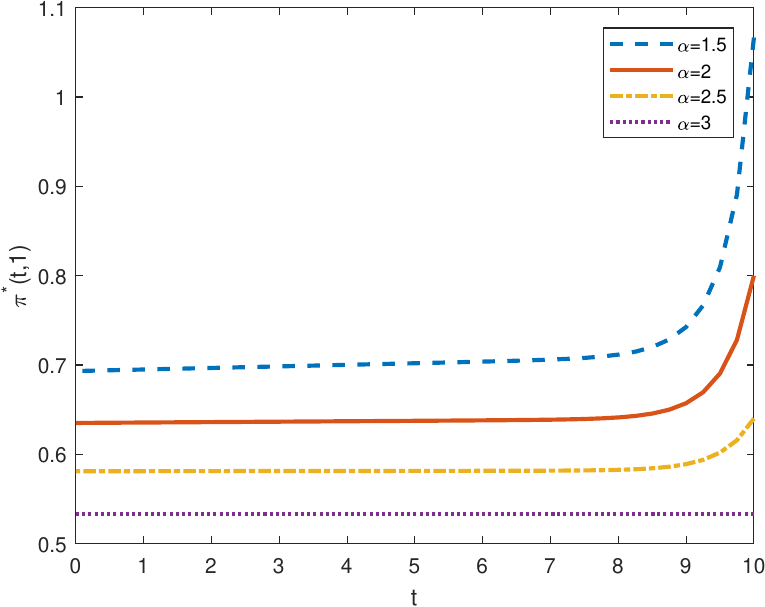}}
  \subcaptionbox{$\pi^*(t,2)$\label{fig12}}
    {\includegraphics[width=0.45\linewidth]{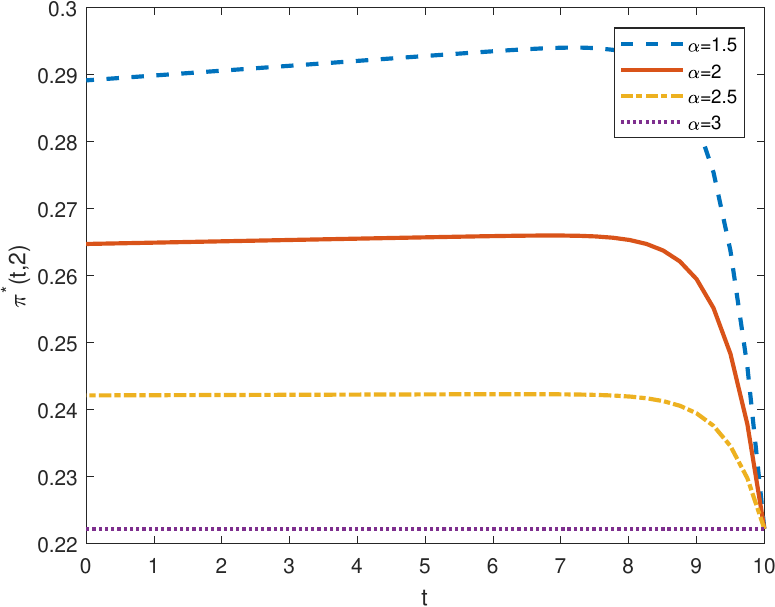}}
  \caption{Effects of $\alpha$ ($\beta=3$).}
  \label{Fig5}
\end{figure}
Figures \ref{Fig4} and \ref{Fig5} show that as the risk aversion rises, the equilibrium investment proportion in risky assets decreases, irrespective of whether the regime is a bull or bear market. This aligns with empirical observations.
\newpage
 \section{\bf Conclusion} 
This paper investigates an equilibrium portfolio selection problem in a continuous-time financial market with regime-switching, which is modeled by an observable continuous-time Markov process. Both market coefficients and the investor's utility functions depend on the prevailing market regime. We adopt an objective function that considers expected certainty equivalents, which provides better economic insight but leads to time-inconsistency. To address this issue, we introduce the concept of an equilibrium strategy, accompanied by Verification Theorem \ref{verification}. Unlike the verification theorem in \cite{desmettre2023equilibrium}, our work allows the regime to influence both market coefficients and the utility function, marking a significant contribution of this paper. Furthermore, we rigorously prove the verification theorem.

Specifically, we analyze the case of two market regimes using power utility functions (CRRA). By applying Verification Theorem \ref{verification}, employing separation of variables, and demonstrating the global existence of four-dimensional non-linear ODEs, we derive the feedback form of the equilibrium strategy. The global existence of the  ODEs represents a key contribution of this research, and we also  rigorously verify the equilibrium solution. Considering expected certainty equivalents, we maintain homogeneity in the beliefs-dependent utilities. Another mathematical contribution of this paper is the explicit solution to the optimization problem under beliefs-dependent utilities.

In our preference model of bull and bear markets, we show that the equilibrium investment strategy lies between two Merton's fractions. Consistent with the standard model of financial markets without regime-switching, the equilibrium investment proportion in the risky asset remains independent of current wealth. However, the equilibrium investment strategy is time-dependent.  The evolution of \(\pi^*\) over time \(t\) can be divided into two phases: when \(t\) is far from the terminal time \(T\), \(\pi^*\) remains relatively stable; as \(t\) approaches \(T\), \(\pi^*\) shifts rapidly to either \(\pi^*_1\) or \(\pi^*_2\), depending on the current regime.

\section*{Acknowledgments}
The authors acknowledge the support from the National Natural Science Foundation of China (Grant No.12271290, No.12371477). The authors also thank the members of the group of Actuarial Sciences and Mathematical Finance at the Department of Mathematical Sciences, Tsinghua University for their feedback and useful conversations. 

\bibliographystyle{plainnat}
\bibliography{references}
\end{document}